\newcommand{\CF}{{\mathcal {F}}}
\newcommand{\CN}{{\mathcal {N}}}
\newcommand{\CO}{{\mathcal {O}}}
\newcommand{\CU}{{\mathcal {U}}}
\newcommand{\CV}{{\mathcal {V}}}
\newcommand{\CZ}{{\mathcal {Z}}}
\newcommand{\RD}{{\mathrm {D}}}
\newcommand{\End}{{\mathrm{End}}}
\newcommand{\GL}{{\mathrm{GL}}}
\newcommand{\rank}{{\mathrm{rank}}}
\newcommand{\SO}{{\mathrm{SO}}}
\newcommand{\Sp}{{\mathrm{Sp}}}
\newcommand{\tr}{{\mathrm{tr}}}
\newcommand{\vsp}{{\vspace{0.2in}}}
\newcommand{\con}{\textit{C}}
\newcommand{\ad}{\operatorname{ad}}
\newcommand{\diag}{\operatorname{diag}}
\newcommand{\oJ}{\operatorname{J}}
\newcommand{\os}{\operatorname{s}}
\newcommand{\oH}{\operatorname{H}}
\newcommand{\oO}{\operatorname{O}}
\newcommand{\oT}{\operatorname{T}}
\newcommand{\oU}{\operatorname{U}}
\newcommand{\oZ}{\operatorname{Z}}
\newcommand{\oD}{\textit{D}}
\newcommand{\sdim}{\operatorname{sdim}}
\newcommand{\rs}{\operatorname{s}}
\renewcommand{\a}{\mathfrak a}
\renewcommand{\u}{\mathfrak u}
\newcommand{\s}{\mathfrak s}
\renewcommand{\o}{\mathfrak o}
\newcommand{\z}{\mathfrak z}
\newcommand{\su}{\mathfrak s \mathfrak u}
\renewcommand{\sl}{\mathfrak s \mathfrak l}
\newcommand{\gl}{\mathfrak g \mathfrak l}
\newcommand{\Z}{\mathbb{Z}}
\newcommand{\C}{\mathbb{C}}
\newcommand{\R}{\mathbb R}
\newcommand{\K}{\mathbb{K}}
\newcommand{\la}{\langle}
\newcommand{\ra}{\rangle}
\newcommand{\be}{\begin {equation}}
\newcommand{\ee}{\end {equation}}
\newcommand{\bee}{\begin {equation*}}
\newcommand{\eee}{\end {equation*}}
\theoremstyle{Theorem}
\newtheorem{introtheorem}{Theorem}
\newtheorem{thm}{Theorem}[section]
\newtheorem{cort}[thm]{Corollary}
\newtheorem{lemt}[thm]{Lemma}
\newtheorem{thmt}[thm]{Theorem}
\theoremstyle{Theorem}
\newtheorem{lem}{Lemma}[section]
\newtheorem{leml}[lem]{Lemma}
\newtheorem{prpl}[lem]{Proposition}
\theoremstyle{Theorem}
\newtheorem{prp}{Proposition}[section]
\newtheorem{lemp}[prp]{Lemma}
\newtheorem{prpp}[prp]{Proposition}
\theoremstyle{Plain}
\theoremstyle{Definition}
\newtheorem{dfnp}[prp]{Definition}
\begin{document}

\title[Multiplicity one theorems]{Multiplicity one theorems: the Archimedean case}

\author{Binyong Sun}
\address{Academy of Mathematics and Systems Science\\
Chinese Academy of Sciences\\
Beijing, 100190, P.R. China} \email{sun@math.ac.cn}

\author{Chen-Bo Zhu}
\address{Department of Mathematics\\
National University of Singapore\\
Block S17, 10 Lower Kent Ridge Road\\
Singapore 119076} \email{matzhucb@nus.edu.sg}

\subjclass[2000]{22E30, 22E46 (Primary)}
\keywords{Classical groups, Jacobi groups, Casselman-Wallach representations, Gelfand-Kazhdan criteria, generalized functions}

\thanks{B. Sun was supported by NUS-MOE grant
R-146-000-102-112, and NSFC grants 10801126 and
10931006. C.-B. Zhu was supported by NUS-MOE grant
R-146-000-102-112.}


\begin{abstract}
Let $G$ be one of the classical Lie groups $\GL_{n+1}(\R)$, $\GL_{n+1}(\C)$, $\oU(p,q+1)$,
$\oO(p,q+1)$, $\oO_{n+1}(\C)$, $\SO(p,q+1)$, $\SO_{n+1}(\C)$,  and let $G'$
be respectively the subgroup $\GL_{n}(\R)$, $\GL_{n}(\C)$, $\oU(p,q)$, $\oO(p,q)$, $\oO_n(\C)$, $\SO(p,q)$, $\SO_n(\C)$, embedded in $G$ in the standard way. We show that every irreducible Casselman-Wallach representation of $G'$ occurs with multiplicity at most one in every irreducible  Casselman-Wallach representation of $G$. Similar results are proved for the Jacobi groups $\GL_{n}(\R)\ltimes \oH_{2n+1}(\R)$,
$\GL_{n}(\C)\ltimes \oH_{2n+1}(\C)$, $\oU(p,q)\ltimes \oH_{2p+2q+1}(\R)$, $\Sp_{2n}(\R)\ltimes \oH_{2n+1}(\R)$, $\Sp_{2n}(\C)\ltimes \oH_{2n+1}(\C)$, with their respective subgroups $\GL_{n}(\R)$,
$\GL_{n}(\C)$, $\oU(p,q)$, $\Sp_{2n}(\R)$, $\Sp_{2n}(\C)$.
\end{abstract}

\maketitle

\tableofcontents

\section{Introduction and main results}
\label{sec:intro}

Let $G$ be one of the (five plus two) classical groups
\begin{equation}
\label{classical}
  \GL_{n+1}(\R),\,\GL_{n+1}(\C),\,\oU(p,q+1),\,\oO(p,q+1),\, \oO_{n+1}(\C), \,\SO(p,q+1),\,\SO_{n+1}(\C),
\end{equation}
or one of the five Jacobi groups
\begin{equation}
\label{Jacobi}
  \GL_{n}(\R)\ltimes \oH_{2n+1}(\R),\, \GL_{n}(\C)\ltimes \oH_{2n+1}(\C),\,\oU(p,q)\ltimes \oH_{2p+2q+1}(\R),
\end{equation}
\[
  \Sp_{2n}(\R)\ltimes \oH_{2n+1}(\R),\,\Sp_{2n}(\C)\ltimes \oH_{2n+1}(\C), \quad p,q,n\geq 0.
\]
Here ``$\oH_{2k+1}$" indicates the appropriate Heisenberg group of dimension $2k+1$. A precise description of Jacobi groups is given in Section \ref{sec:proofC}.

Let $G'$ be respectively the subgroup
\[
  \GL_{n}(\R),\,\GL_{n}(\C),\,\oU(p,q),\,\oO(p,q),\, \oO_n(\C), \,\SO(p,q),\,\SO_n(\C),
\]
or
\[
  \GL_{n}(\R),\,\GL_{n}(\C),\,\oU(p,q),\,\Sp_{2n}(\R),\, \Sp_{2n}(\C),
\]
embedded in $G$ in the standard way. The main technical result of this paper is the following

\begin{introtheorem}
\label{thm:main} There exists a real algebraic anti-automorphism $\sigma$ on $G$ preserving $G'$ with the following property: every generalized function on $G$ which is invariant under the adjoint action of $G'$ is automatically $\sigma$-invariant.
\end{introtheorem}

A set of anti-automorphisms which satisfy Theorem \ref{thm:main} is
constructed in Section \ref{sec:proofC}. For example, the matrix
transpose is one such anti-automorphism when $G$ is a general linear
group.

By a representation of a Lie group, we mean a continuous linear
action of the group on a (complete, Hausdorff, complex) locally
convex topological vector space. When the Lie group is real
reductive, a representation is said to be a Casselman-Wallach
representation if it is Fr\'{e}chet, smooth, of moderate growth,
admissible and $\oZ$-finite. Here $\oZ$ is the center of the
universal enveloping algebra of its complexified Lie algebra. The
reader may consult \cite{Cass}, \cite[Chapter 11]{W2} or \cite{BK}
for more details about Casselman-Wallach representations.

By (a version of) the Gelfand-Kazhdan criterion (\cite[Corollary
2.5]{SZ08}), Theorem \ref{thm:main} for the classical groups implies
the following result (which we call the multiplicity one theorem for
Bessel models, as it implies uniqueness of the Bessel models
(\cite{GGP, JSZ1})).

\begin{introtheorem}
\label{thm:mainB}
Let $G$ be one of the classical groups in \eqref{classical}. Let $V$ (resp. $V'$) be an irreducible Casselman-Wallach representation of $G$ (resp. $G'$). Then the space of $G'$-invariant continuous bilinear functionals on $V\times V'$ is at
most one dimensional.
\end{introtheorem}

Theorem \ref{thm:mainB} and its p-adic analog have been expected
(by Bernstein, and Rallis) since the 1980's. When $V'$ is the trivial
representation, Theorem \ref{thm:mainB} is proved in \cite{AGS0},
\cite{AGS2} and \cite{Dijk}, in the case of general linear,
orthogonal, and unitary groups, respectively. The p-adic analog of
Theorem \ref{thm:mainB} is proved in \cite{AGRS} (except for the case of special orthogonal groups, which is proved in \cite{Wald09}). When the initial manuscript of this paper was completed, the authors learned that A. Aizenbud and D. Gourevitch had proved the multiplicity one theorems for the pairs $(\GL_{n+1}(\R),\GL_n(\R))$ and $(\GL_{n+1}(\C),\GL_n(\C))$, independently and in a different approach. This has since appeared as \cite{AG2}.

\vsp

Now assume that $G$ is one of the five Jacobi groups. Write
$G=G'\ltimes H$, where $H$ is an appropriate Heisenberg group. Fix a
non-trivial unitary character $\psi$ on the center of $H$. Let
$\widetilde G'$ be a double cover of $G'$ so that $\widetilde
G'\ltimes H$ admits a smooth oscillator representation $\omega_\psi$
corresponding to $\psi$, that is, $\omega_\psi$ is a genuine smooth
Fr\'{e}chet moderate growth representation of $\widetilde G'\ltimes
H$ which is irreducible with central character $\psi$ when
 viewed as a representation of $H$. We say that a representation of $G$
 is a Casselman-Wallach $\psi$-representation if it is of the form  $V=V_0\widehat \otimes \omega_\psi$
 (completed projective tensor product), where $V_0$ is a genuine Casselman-Wallach representation of $\widetilde G'$. This definition is independent of $\widetilde G'$ and $\omega_\psi$. The representation $V$ is irreducible if and only if $V_0$ is. See \cite{Su10} for more details on Casselman-Wallach $\psi$-representations.

By the Gelfand-Kazhdan criterion for Jacobi groups (\cite[Corollary
D]{Su10}), Theorem \ref{thm:main} for the Jacobi groups implies the
following result (which we call the multiplicity one theorem for
Fourier-Jacobi models, as it implies uniqueness of the
Fourier-Jacobi models (cf. \cite{GGP})).

\begin{introtheorem}
\label{thm:mainC}
Let $G$ be one of the Jacobi groups in \eqref{Jacobi}. Let $V$ be an irreducible Casselman-Wallach $\psi$-representation of $G$, and let $V'$ be an irreducible Casselman-Wallach representation of $G'$. Then the space of $G'$-invariant continuous bilinear functionals on $V\times V'$ is at
most one dimensional.
\end{introtheorem}

The p-adic analog of Theorem \ref{thm:mainC} was conjectured by D. Prasad (\cite[Page 20]{Pr96} in the case of symplectic groups), and is proved in \cite{Su09}.

\section{A uniform formulation}
\label{sec:formulation}

We first introduce some general notation which will be used throughout the paper.
For any (smooth) manifold $M$, denote by $\con^{-\infty}(M)$ the space of generalized functions on $M$, which by definition consists of continuous linear functionals on $\RD_c^\infty(M)$,
the space of (complex) smooth densities on $M$ with compact
supports. The latter is equipped with the usual inductive smooth
topology. For any locally closed subset $Z$ of $M$, denote by
\begin{equation}
  \label{dcmz} \con^{-\infty}(M;Z) \subset \con^{-\infty}(U)
\end{equation}
the subspace consisting of all $f$ which are supported in $Z$, where
$U$ is an open subset of $M$ containing $Z$ as a closed subset.
This definition is independent of $U$.

If $M$ is a Nash manifold, denote by
$\con^{-\xi}(M)\subset\con^{-\infty}(M)$ the space of tempered
generalized functions on $M$, and by $\con^{\,\varsigma}(M)\subset
\con^{-\xi}(M)$ the space of Schwartz functions. We refer the
interested reader to \cite{Sh, AG1} on generalities of Nash
manifolds and their function spaces. Since the closure of every
semialgebraic set is semialgebraic, given any locally closed
semialgebraic subset $Z$ of a Nash manifold $M$, we may find an open
semialgebraic subset $U$ of $M$ containing $Z$ as a closed subset.
We define $\con^{-\xi}(M;Z)$ as the subspace of $\con^{-\xi}(U)$
consisting of all $f$ which are supported in $Z$. Again this is
independent of $U$.

If $H$ is a Lie group
acting smoothly on a manifold $M$, then for any character $\chi_H$
of $H$, denote by \begin{equation} \label{dchi}
\con^{-\infty}_{\chi_H}(M)\subset \con^{-\infty}(M) \end{equation}
the subspace consisting of all $f$ which are $\chi_H$-equivariant,
i.e.,
\[
   f(h\cdot x)=\chi_H(h)f(x),\quad \textrm{for all } h\in
   H.
\]
Similar notation (such as $\con^{-\xi}_{\chi_H}(M;Z)$) will be used without further explanation.

\vsp

We now proceed to describe a general set-up in order to work with all classical groups in a uniform manner.

Let $A$ be a finite dimensional semi-simple commutative algebra
over $\R$, which is thus a finite product of copies of $\R$ and
$\C$. Let $\tau$ be a $\R$-algebra involution on $A$. We call $(A,\tau)$ (or $A$ when $\tau$ is understood) a commutative involutive algebra (over $\R$).  Let $\epsilon=\pm 1$. Let $E$ be an $\epsilon$-Hermitian $A$-module, namely, it is a finitely generated $A$-module, equipped with a non-degenerate $\R$-bilinear map
\[
  \la\,,\,\ra_E:E\times E\rightarrow A
\]
satisfying
\[
     \la u,v\ra_E=\epsilon \la v,u\ra_E^\tau, \quad \la au,v\ra_E=a\la u,
     v\ra_E,\quad a\in A,\, u,v\in E.
\]
Denote by $\oU(E)$ the group of all $A$-module automorphisms of $E$
which preserve the form $\la\,,\,\ra_E$, and by $\u(E)$ its Lie
algebra, which consists of all $x\in \End_A(E)$ such that
\[
  \la xu, v\ra_E+\la u, xv\ra_E=0,\quad u,v\in E.
\]

Write $E_\R:=E$, viewed as a real vector space. Following
Moeglin-Vigneras-Waldspurger (\cite{MVW87}), we define a subgroup
\begin{equation}\label{utilde}
       \breve{\oU}(E)\subset\GL(E_\R)\times \{\pm 1\}
\end{equation}
consisting of pairs $(g,\delta)$ such that either
\[
  \delta=1 \quad\textrm{and}\quad \la gu,gv\ra_E=\la u,v\ra_E,\,\,\,  u,v\in E,
\]
or
\[
     \delta=-1 \quad\textrm{and}\quad
     \la gu,gv\ra_E=\la v,u\ra_E,\,\,\,  u,v\in E.
  \]
Note that for every element
$(g,\delta)\in \breve{\oU}(E)$, if $\delta=1$, then $g$ is
automatically $A$-linear, and if $\delta=-1$, then $g$ is
$\tau$-conjugate linear. Denote by
\begin{equation}
\label{dctilde}
  \chi_E: \breve{\oU}(E)\rightarrow \{\pm 1\}
\end{equation}
the quadratic character of $\breve{\oU}(E)$ projecting to the second
factor. It is a surjective homomorphism with kernel $\oU(E)$.

Let $\breve{\oU}(E)$ act on $\oU(E)$ by
\begin{equation}\label{actiong}
  (g,\delta)\cdot x:=gx^\delta g^{-1},
\end{equation}
and act on $\u(E)$ through its differential, i.e.,
\[
  (g,\delta)\cdot x:=\delta gxg^{-1}.
\]
It is known that every $\oU(E)$-orbit in $\oU(E)$ or $\u(E)$ is  $\breve{\oU}(E)$-stable (\cite[Proposition 4.I.2]{MVW87}). Let $\breve{\oU}(E)$ act on $E$ by
\begin{equation}\label{actione}
  (g,\delta)\cdot v:=\delta gv,
\end{equation}
and act on  $\oU(E)\times E$ and $\u(E)\times E$ diagonally.

The next five sections will be devoted to a proof of the following

\begin{thm}\label{liealg}
One has that
\[
   \con^{-\xi}_{\chi_E}(\u(E)\times E)=0.
\]
\end{thm}

\section{Fourier transform and rigidity}
\label{sec:rigid}

Let $F$ be a finite dimensional real vector space, which is
canonically a Nash manifold. Denote by $\C[F]$ the algebra of
(complex) polynomial functions and $\oD[F]$ the algebra of constant
coefficient differential operators, on $F$. It is a classical result
of L. Schwartz that
\[
  \con^{-\xi}(F; \{0\})=\oD[F] \delta_F.
\]
Here $\delta_F$ is a Dirac function on $F$, which is characterized
(up to a nonzero scalar) by the equation
\[
  \lambda \delta_{F}=0, \quad \textrm{for all real linear functionals $\lambda$ on $F$.}
  \]
This is the simplest case of rigidity we have in mind.

\vsp From now on, we assume that $F$ is equipped with a
non-degenerate  bilinear form $\la\,,\,\ra_F$ which is either symmetric or skew-symmetric.

We introduce one general notation which will be extensively used. If $M$ is a Nash manifold, we define the partial Fourier transform along $F$ to be the topological linear
automorphism
\[
  \CF_F:\con^{\,\varsigma}(M\times F)\rightarrow
    \con^{\,\varsigma}(M\times F)
\]
given by
\begin{equation}
\label{dpFourier}
  (\CF_F f)(m, y)=\int_F f(m,x)e^{2\pi \sqrt{-1}\,\la x,y\ra_F}\,dx, \ \ m\in M, \ y\in F.
\end{equation}
Here $dx$ is any fixed Lebesgue measure on $F$. The partial Fourier
transform uniquely extends to a topological linear isomorphism
\[
  \CF_F: \con^{-\xi}(M\times F)\rightarrow
\con^{-\xi}(M\times F).
\]
When $M$ reduces to a single point, we recover the usual Fourier transform.

When the factor $F$ is understood, for any two closed semialgebraic subsets $Z_1$ and $Z_2$ of $M\times F$, put
\begin{equation}
\label{double-support}
  \con^{-\xi}(M\times F;Z_1,Z_2):=\{f\in \con^{-\xi}(M\times F; Z_1)\mid \CF_F (f)\in \con^{-\xi}(M\times F; Z_2)\}.
\end{equation}

For a subspace $F'$ of $F$, let $F'^\perp$ denote its
perpendicular space:
\[
 F'^\perp:=\{v\in F\mid \la v', v\ra_F=0, \,v'\in
 F'\}.
\]

\begin{lem}\label{lemf1}
If $F=F'\oplus F''$ is a direct sum decomposition, then
\[
   \con^{-\xi}(F;F',F'^\perp)=\C[F']\otimes \con^{-\xi}(F'';\{0\}).
\]
\end{lem}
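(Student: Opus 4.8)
The plan is to establish the asserted equality by proving both inclusions, and then to deduce irreducibility by an explicit generation argument. Write $x'$ and $x''$ for the coordinates along $F'$ and $F''$ respectively; then $\con^{-\xi}(F)$ contains $\con^{-\xi}(F')\otimes\con^{-\xi}(F'')$, for $v\in F'$ the operator $\partial v$ acts as $(\partial v)\otimes 1$, and $\con^{-\xi}(F'';\{0\})$ is spanned by the derivatives $\partial^\alpha\delta_{F''}$ of the Dirac function. The inclusion $\supseteq$ is immediate: a finite sum $\sum_j p_j\otimes g_j$ with $p_j\in\C[F']$ and $g_j\in\con^{-\xi}(F'';\{0\})$ is supported in $F'\times\{0\}=F'$, and $(\partial v)^N$ applied to it equals $\sum_j\bigl((\partial v)^N p_j\bigr)\otimes g_j$, which vanishes once $N$ exceeds the degrees of all the $p_j$; hence each such element lies in $\con^{-\xi}(F;F')^{\partial F'}$.

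For the reverse inclusion, I would first invoke the structure theorem for tempered generalized functions supported on a linear subspace (see \cite{Sh,AG1}; this is the classical statement that a tempered distribution supported on $F'\times\{0\}$ has finite transverse order): any $f\in\con^{-\xi}(F;F')$ can be written uniquely as a finite sum $f=\sum_{|\alpha|\le k}f_\alpha\otimes\partial^\alpha\delta_{F''}$ with $f_\alpha\in\con^{-\xi}(F')$, the uniqueness coming from pairing with test densities of product form $\phi(x')\psi(x'')$. If moreover $f\in\con^{-\xi}(F;F')^{\partial F'}$, then for each $v\in F'$ there is $N$ with $0=(\partial v)^N f=\sum_\alpha\bigl((\partial v)^N f_\alpha\bigr)\otimes\partial^\alpha\delta_{F''}$, so by uniqueness $(\partial v)^N f_\alpha=0$ for every $\alpha$. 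Thus each $f_\alpha\in\con^{-\xi}(F')$ is killed by some power of every directional derivative; passing to the Fourier transform, $\widehat{f_\alpha}$ is annihilated by a power of every linear coordinate, hence is supported at the origin, hence is a finite combination of derivatives of the Dirac function on $(F')^*$, and therefore $f_\alpha\in\C[F']$. This gives $f\in\C[F']\otimes\con^{-\xi}(F'';\{0\})$, completing the identification.

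Finally, for irreducibility: let $0\ne f=\sum_\alpha p_\alpha\otimes\partial^\alpha\delta_{F''}$ in $\C[F']\otimes\con^{-\xi}(F'';\{0\})$, and choose $\alpha_0$ with $p_{\alpha_0}\ne 0$ and $|\alpha_0|$ maximal among such. Multiplying by the monomial $(x'')^{\alpha_0}\in\C[F'']$ kills every term with $\alpha\not\geq\alpha_0$ componentwise, while a surviving term satisfies $\alpha\geq\alpha_0$ hence, by maximality of $|\alpha_0|$, equals $\alpha_0$; so we are left with a nonzero multiple of $p_{\alpha_0}\otimes\delta_{F''}$. Applying a suitable constant coefficient operator in $\oD[F']$ turns the nonzero polynomial $p_{\alpha_0}$ into a nonzero constant, so the $\oW[F]$-submodule generated by $f$ contains $1\otimes\delta_{F''}$; multiplying by arbitrary $p\in\C[F']$ and differentiating in the $x''$-variables then produces every $p\otimes\partial^\alpha\delta_{F''}$, and these span $\C[F']\otimes\con^{-\xi}(F'';\{0\})$. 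Hence the module is irreducible. (Alternatively, one may note that $\C[F']$ is simple over $\oW[F']$ and $\con^{-\xi}(F'';\{0\})$ is simple over $\oW[F'']$ — the latter being the irreducibility of $\con^{-\xi}(F;\{0\})$ already recorded above — and invoke the fact that the outer tensor product of simple modules over $\C$-algebras, one of which has scalar endomorphism ring, is simple.)

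The main obstacle I expect is the use of the structure theorem: everything hinges on the expansion $f=\sum_\alpha f_\alpha\otimes\partial^\alpha\delta_{F''}$ being \emph{finite} (and its coefficients unique), i.e.\ on the finiteness of the transverse order, which genuinely uses temperedness — for arbitrary generalized functions the transverse expansion need not terminate. Once this is in hand the remaining steps are routine Fourier analysis and bookkeeping with the Weyl algebra action.
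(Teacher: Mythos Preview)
Your proof is correct and follows essentially the same approach as the paper. The paper's argument is just a terse sketch: it invokes finite order of tempered generalized functions together with Schwartz's structure theorem to obtain $\con^{-\xi}(F;F')=\con^{-\xi}(F')\otimes \con^{-\xi}(F'';\{0\})$, and then says only that ``the lemma follows easily''; you have spelled out precisely the steps hidden in that phrase (the Fourier argument reducing $f_\alpha$ to a polynomial, and the explicit cyclic generation for irreducibility), and your identification of the finite transverse order as the crux matches the paper's one-line justification exactly.
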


\begin{proof} Note that every tempered generalized function has a
finite order. Hence by the well-known result of L. Schwartz about
local representation of a generalized function with support, we
have
\[
  \con^{-\xi}(F;F')=\con^{-\xi}(F')\otimes \con^{-\xi}(F'';\{0\}).
\]
The lemma then follows easily.
\end{proof}

For later use, we record the following
\begin{prpl}\label{lemf3}
If $F^0$ is a non-degenerate subspace of $F$, and
\[
  (F^0)^\perp=F^+\oplus F^-
\]
is a decomposition into totally isotropic subspaces $F^+$ and $F^-$, then
\[
 \con^{-\xi}(F;F^+\oplus F^0,F^+\oplus F^0)=\C[F^+]\otimes \con^{-\xi}(F^-;\{0\})
 \otimes \con^{-\xi}(F^0).
\]
\end{prpl}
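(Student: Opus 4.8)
The plan is to convert the condition on the Fourier transform into a nilpotency condition on constant-coefficient vector fields, and then to read off the module from the Schwartz structure theorem exactly as in the proof of Lemma~\ref{lemf1}.

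I would begin with the relevant linear algebra. Since $F^0$ is non-degenerate, so is $(F^0)^\perp$, and being the sum of the two totally isotropic subspaces $F^+$ and $F^-$ it is split, with $F^+$ and $F^-$ dual Lagrangians; in particular $\dim F^+=\dim F^-$. Moreover $F^+\subseteq(F^+)^\perp$ (as $F^+$ is isotropic), and $F^0\subseteq(F^+)^\perp$ because $F^0\perp(F^0)^\perp\supseteq F^+$. Hence $F^+\oplus F^0\subseteq(F^+)^\perp$, and since
\[
 \dim(F^+\oplus F^0)=\dim F-\dim F^-=\dim F-\dim F^+=\dim(F^+)^\perp,
\]
we conclude $(F^+)^\perp=F^+\oplus F^0$. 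Now, applying the equivalence recorded just before \eqref{dsame} with $F'=F^+$, the Fourier transform of $f$ is supported in $F^+\oplus F^0=(F^+)^\perp$ precisely when $\partial v$ is nilpotent on $f$ for every $v\in F^+$, so
\[
 \con^{-\xi}(F;F^+\oplus F^0,F^+\oplus F^0)=\{\,f\in\con^{-\xi}(F;F^+\oplus F^0)\mid \partial v \text{ nilpotent on } f,\ v\in F^+\,\}.
\]
Next, by the Schwartz structure theorem used in the proof of Lemma~\ref{lemf1} (applied with $F^+\oplus F^0$ in place of $F'$ and $F^-$ in place of $F''$), together with the factorization $\con^{-\xi}(F^+\oplus F^0)=\con^{-\xi}(F^+)\otimes\con^{-\xi}(F^0)$ of tempered generalized functions on a product, one identifies
\[
 \con^{-\xi}(F;F^+\oplus F^0)=\con^{-\xi}(F^+)\otimes\con^{-\xi}(F^0)\otimes\con^{-\xi}(F^-;\{0\}).
\]
Under this identification $\partial v$, for $v\in F^+$, acts as $\partial v\otimes\mathrm{id}\otimes\mathrm{id}$, so the nilpotency condition replaces the first factor by the subspace of $\con^{-\xi}(F^+)$ on which every $\partial v$ ($v\in F^+$) is nilpotent, which is exactly $\C[F^+]$. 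This yields
\[
 \con^{-\xi}(F;F^+\oplus F^0,F^+\oplus F^0)=\C[F^+]\otimes\con^{-\xi}(F^0)\otimes\con^{-\xi}(F^-;\{0\}),
\]
which is the assertion.

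The only delicate point is the functional-analytic bookkeeping: one must check that the Schwartz decomposition is compatible with the tensor factorization of $\con^{-\xi}$ over a product and that the operators $\partial v$ ($v\in F^+$) act on the first tensor factor alone, and one uses the elementary fact that a tempered generalized function on a vector space all of whose iterated directional derivatives eventually vanish is a polynomial. All of this is of the same nature as, and no harder than, the arguments already used for Lemmas~\ref{lemf1} and \ref{lemf2}, so no essential new difficulty arises.
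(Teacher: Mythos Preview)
Your proof is correct and is precisely the intended argument: the paper's own proof reads in its entirety ``The proof is similar to that of Lemma~\ref{lemf1},'' and what you have written is exactly the natural unpacking of that sentence---identify $(F^+)^\perp=F^+\oplus F^0$, translate the Fourier-support condition into nilpotency of $\partial v$ for $v\in F^+$, apply the Schwartz structure theorem as in Lemma~\ref{lemf1}, and recognize the polynomial factor.
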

\begin{proof}
The proof is similar to that of Lemma \ref{lemf1}.
\end{proof}

We also need the following result, which is a special case of \cite[Theorem A]{SZ10}.

\begin{prpl}\label{quad}
Assume $\dim_\R F=2k$. Let $F_1,F_2,\cdots, F_s$ be a set of
(distinct) totally isotropic subspaces of $F$, each of dimension
$k$. Then
\[
   \con^{-\xi}(F;F_1\cup F_2\cup \cdots \cup  F_s, F_1\cup F_2\cup \cdots \cup  F_s)
   =\bigoplus_{i=1}^s \con^{-\xi}(F;F_i,
   F_i).
\]
\end{prpl}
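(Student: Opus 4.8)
The plan is to induct on $s$ and peel off one totally isotropic subspace at a time, using Lemma \ref{weyl} to control the structure at each stage. Write $Z = F_1 \cup F_2 \cup \cdots \cup F_s$ and $\CV = \con^{-\xi}(F; Z, Z)$. Since every element of $\CV$ has Fourier transform supported in $Z$, which is contained in the null cone $\Gamma_F$, every element of $\CV$ is annihilated by a power of $\Delta_F$; so $\Delta_F$ acts locally nilpotently on $\CV$. The idea is to analyze $\CV$ by studying, for each $i$, the subspace of $\CV$ supported (in the $x$-variable) in $F_i$ but not in the smaller subspaces, and simultaneously keeping track of the Fourier-transform support.

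First I would treat the base case $s=1$: here $\CV = \con^{-\xi}(F; F_1, F_1)$, which by \eqref{dsame} (applied with $F' = F_1$, noting $F_1^\perp = F_1$ since $F_1$ is totally isotropic of dimension $r = \tfrac12 \dim F$) equals $\con^{-\xi}(F;F_1)^{\partial F_1}$, and this is irreducible by Lemma \ref{lemf1}. Next, for the inductive step, the key observation is that Lemma \ref{weyl} applies to any $\oW[F]$-subquotient of $\CV$ on which some $\lambda \in (F/F_i)^*$ acts locally nilpotently (the $\Delta_F$-hypothesis being automatic): such a module is generated by the joint kernel of these $\lambda$'s together with the $\partial v$, $v \in F_i$, and hence — by Lemmas \ref{lemf1} and \ref{lemf2} — is a direct sum of copies of $\con^{-\xi}(F; F_i, F_i)$. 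So the strategy is: produce a $\oW[F]$-submodule $\CV_i \subseteq \CV$ on which the ideal of $F_i$ acts locally nilpotently, show $\CV_i$ is a sum of copies of $\con^{-\xi}(F;F_i,F_i)$, show $\sum_i \CV_i = \CV$, and finally show the sum is direct (or at least that $\CV$ is completely reducible with the stated factors). A natural candidate for $\CV_i$: filter $Z$ by closed subsets — e.g. intersect with complements of the other $F_j$'s — so that $F_i \setminus \bigcup_{j \ne i} F_j$ is open in $Z$; generalized functions supported there, with Fourier transform also supported in $F_i$, should be governed by $\con^{-\xi}(F; F_i, F_i)$ via a localization argument, and one can try to lift a splitting. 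Alternatively, and perhaps more cleanly, one uses that $\bigcup_{j} F_j \setminus F_i$ is still a union of totally isotropic $r$-spaces (intersected appropriately) to run the induction on the "complementary" piece $\con^{-\xi}(F; Z', Z')$ with $Z' = \bigcup_{j\ne i} F_j$ and splice via a short exact sequence $0 \to \con^{-\xi}(F; F_i, F_i)^{\oplus ?} \to \CV \to \con^{-\xi}(F; Z', Z') \to 0$, then show it splits because the outer terms have no common irreducible constituents — $\con^{-\xi}(F; F_i, F_i)$ has Fourier support exactly $F_i$, while the constituents of the quotient have Fourier support in the other $F_j$'s — so $\mathrm{Ext}^1$ between them vanishes (the extension would force Fourier support to spread, contradicting the definition of $\CV$).

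The main obstacle will be establishing that the sum $\sum_i \CV_i$ exhausts $\CV$ and that there are no "mixed" or nonsplit extensions — i.e. showing that a generalized function in $\CV$ whose support meets several $F_i$ is genuinely a sum of pieces, each with support and Fourier-support in a single $F_i$. The Schwartz local structure theorem handles the support side on the open stratum $F_i \setminus \bigcup_{j\ne i}F_j$, but transverse to the lower-dimensional intersections $F_i \cap F_j$ one must argue that the constraint "$\CF_F f$ supported in $\bigcup F_j$" is rigid enough to prevent gluing phenomena; this is exactly where Lemma \ref{weyl} does the work, converting the Fourier-support condition (local nilpotence of $\Delta_F$) plus a local-nilpotence statement for one linear coordinate family into the full Stone–von Neumann rigidity. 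I would organize the transverse analysis as a double induction — on $s$ and on $\dim F$ — slicing by a subspace adapted to $F_i$ so that the restriction of $\CV$ becomes a module of the type to which Lemma \ref{weyl} and the inductive hypothesis both apply, and checking that no extension survives because the relevant Fourier supports are disjoint outside the origin.
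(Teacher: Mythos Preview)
Your proposal has the right instinct that Lemma \ref{weyl} is the engine, but the architecture you build around it has a real gap, and you are missing the one ingredient that makes the argument short.

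The short exact sequence you propose,
\[
  0 \to \con^{-\xi}(F;F_i,F_i)^{\oplus ?} \to \CV \to \con^{-\xi}(F;Z',Z') \to 0,
\]
is not well-defined. The submodule of $\CV$ consisting of elements supported in $F_i$ is $\con^{-\xi}(F;F_i,\tilde Z)$, not $\con^{-\xi}(F;F_i,F_i)$: you have not shown that the Fourier-support collapses from $\tilde Z$ to $F_i$ once the $x$-support is in $F_i$. On the quotient side it is worse: restriction to the open set $F\setminus F_i$ does not commute with Fourier transform, so there is no reason the image lands in $\con^{-\xi}(F;Z',Z')$. Your Ext-vanishing heuristic (``the extension would force Fourier support to spread'') is circular --- it presupposes exactly the separation of supports you are trying to prove. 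The double induction on $s$ and $\dim F$ that you sketch at the end has no concrete mechanism for handling the intersections $F_i\cap F_j$.

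What you are missing is Lemma \ref{lapl}. The paper does not attempt to decompose $\CV$ internally at all; instead it \emph{embeds} $\CV$ into a finite product of irreducibles. Set $\tilde Z=\bigcup_i F_i$ and $Z=\bigcup_{i\neq j}(F_i\cap F_j)$; the connected components $F_1^\circ,\dots,F_N^\circ$ of $\tilde Z\setminus Z$ are open semialgebraic pieces, each sitting inside a single $F_i$. Restriction gives a $\oW[F]$-map $\CV\to\prod_k \CV_{F,F_k^\circ}$, where $\CV_{F,F_k^\circ}$ is the space of tempered generalized functions supported in $F_k^\circ$ on which $\Delta_F$ is nilpotent. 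Lemma \ref{weyl} (plus irreducibility of the source) shows the restriction $\con^{-\xi}(F;F_i,F_i)\to\CV_{F,F_k^\circ}$ is an isomorphism, so each factor on the right is already irreducible of the required type. The kernel of the product map is $\con^{-\xi}(F;Z,\tilde Z)$; but every $F_i\cap F_j$ has dimension $<r$ and is therefore metrically proper in $F$, so $Z$ is piecewise metrically proper, and Lemma \ref{lapl} forces this kernel to vanish. That is the whole proof: one injection, no induction, no Ext computations.
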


\section{Nonnegativity of eigenvalues of an Euler vector field}
\label{sec:eigenvalue}

We continue with the notation of Section \ref{sec:formulation}. Set
\[
  \oU(A):=\{a\in A^\times\mid a^\tau a=1\},
\]
and its Lie algebra
\[
  \u(A):=\{a\in A\mid a^\tau+ a=0\}.
\]
Scalar multiplication then yields a homomorphism $\oU(A)\rightarrow
\oU(E)$ and its differential $\u(A)\rightarrow \u(E)$. Denote by
$\oZ(E)$ and $\z(E)$ their respective images. Then $\oZ(E)$
coincides with the center of $\oU(E)$ (but $\z(E)$ may not coincide
with the center of $\u(E)$).

Denote by
\[
  \tr_A: \End_A(E)\rightarrow A
\]
the trace map. It is specified by requiring that the diagram
\[
  \begin{CD}
            \End_A(E)@>\tr_A>> A\\
            @V 1_{A_0}\otimes VV           @VVV\\
            \End_{A_0}(A_0\otimes_A E)@>\tr>> A_0
  \end{CD}
\]
commutes for every quotient field $A_0$ of $A$, where the bottom
arrow is the usual trace map. Set
\[
  \sl(E):=\{x\in \End_A(E)\mid \tr_A(x)=0\}.
\]
Then we have
\begin{equation}\label{decomgl}
  \End_A(E)=\{\textrm{scalar multiplication by $a\in A$}\}\oplus \sl(E),
\end{equation}
and
\[
   \u(E)=\z(E)\oplus \su(E),
\]
where $\su(E):=\u(E)\cap \sl(E)$.

\vsp
We call the commutative involutive algebra $A$ simple if it is nonzero, and
has no $\tau$-stable ideal except $\{0\}$ and itself. Every
simple commutative involutive algebra is isomorphic to one of the
followings:
\begin{equation}\label{five}
   (\R, 1),\, \,(\C,1),\,\, (\C, \overline{\phantom{a}}\,),\,\, (\R\times \R,\tau_\R),
   \,\,(\C\times
   \C,\tau_\C),
\end{equation}
where $\tau_\R$ and $\tau_\C$ are the maps which interchange the
coordinates.

Assume in the rest of the section that $A$ is simple. We say that
$(A,\tau; \epsilon)$ is of orthogonal type if it is either $(\R, 1;
1)$ or $(\C,1;1)$. If $(A,\tau; \epsilon)$ is not of orthogonal
type, we fix a nonzero element $c_0\in A$ so that
\[
  c_0+\epsilon c_0^\tau=0.
\]
For any $v\in E$, write
\[
  \phi_v(u):=\la u,v\ra_E\, v, \quad u\in E,
\]
then $\phi_v\in \End_A(E)$.  Denote by $\phi'_v\in \sl(E)$ the
projection of $\phi_v$ to the second factor according to the
decomposition \eqref{decomgl}. For any $x\in \su(E)$, set
\begin{equation}\label{psixv}
  \phi_{x,v}:=\left\{
                     \begin{array}{ll}
                         x\phi_v+\phi_v x,\quad&\textrm{if  $(A,\tau; \epsilon)$ is of orthogonal type,}\\
                         c_0\,\phi'_v, \quad&\textrm{otherwise.}\smallskip\\

                     \end{array}
              \right.
\end{equation}
This is checked to be in $\su(E)$.

Recall that an element of $\u(E)$ is said to be
nilpotent (semisimple) if it is nilpotent (semisimple) as a $\R$-linear operator on $E$. Recall also that a nilpotent element of $\u(E)$ (which is automatically in $\su(E)$) is said to be distinguished if it commutes with no nonzero semisimple element
in $\su(E)$ (cf. \cite[Section 8.2]{CM}).

Fix a distinguished nilpotent element $\mathbf e\in \su(E)$.  Following \cite{AGRS}, we
define
\begin{equation}\label{dee}
  E(\mathbf e):=\{\,v\in E\mid \phi_{\mathbf e,v}\in [\su(E),\mathbf e]\,\}.
\end{equation}

Extend $\mathbf e$ (by Jacobson-Morozov Theorem) to a standard triple $\mathbf h, \mathbf e, \mathbf f$ in $\su(E)$ so that
\[
  [\mathbf h, \mathbf e]=2\mathbf e, \quad
  [\mathbf h,\mathbf f]=-2\mathbf f,\quad [\mathbf e, \mathbf f]=\mathbf h.
\]
Denote by $E_{\mathbf{h}}^i\subset E$ the
eigenspace of $\mathbf{h}$ with eigenvalue $i$, where $i\in \Z$.
Write
\[
   E_{\mathbf{h}}^+:=\bigoplus_{i> 0} E_{\mathbf{h}}^i,\quad  \textrm{and}\quad
    E_{\mathbf{h}}^-:=\bigoplus_{i<0} E_{\mathbf{h}}^i.
\]
Note that $\la E_{\mathbf{h}}^i, E_{\mathbf{h}}^j\ra_E=0$ whenever $i+j\neq 0$.

\begin{leml}
\label{Ee}
If $A$ is a field, then $E(\mathbf e)=E_{\mathbf
h}^{+}\oplus E_{\mathbf h}^0$.
\end{leml}
\begin{proof}
We prove the lemma in the case of real orthogonal groups. The other
cases are proved similarly. So assume that
$(A,\tau;\epsilon)=(\R,1;1)$. Then $\su(E)=\o(E)$ is a real
orthogonal Lie algebra.

View $E$ as a $\sl_2(\R)$-module via the standard triple. Let
\begin{equation}\label{dece}
   E=E_1\oplus E_2\oplus\cdots \oplus E_s
\end{equation}
be a decomposition of $E$ into irreducible $\sl_2(\R)$-modules. By the classification of distinguished nilpotent orbits (\cite[Theorem 8.2.14]{CM}), we know that
(\ref{dece}) is an orthogonal decomposition, and $E_1, E_2,
\cdots, E_s$ have pairwise distinct odd dimensions. Denote by
$\mathbf{e}_i\in \o(E_i)\subset \o(E)$ the restriction of $\mathbf e$ to
$E_i$.

View $\o(E)$ as a real  quadratic space under the trace form.  For every $v\in E$, we have that $v\in E(\mathbf e)$ if and only if
\begin{eqnarray*}
&&\phi_{\mathbf e,v}\in [\o(E), \mathbf e] \Leftrightarrow \phi_{\mathbf e,v}\perp [\o(E), \mathbf e]^\perp\\
&&\phantom { \phi_{\mathbf e,v}\in [\o(E), \mathbf e] } \Leftrightarrow  \phi_{\mathbf e,v}\perp \o(E)^{\mathbf e} \,\,\textrm{ (the centralizer of $\mathbf e$ in $\o(E)$)}\\
&&\phantom { \phi_{\mathbf e,v}\in [\o(E), \mathbf e] } \Leftrightarrow  \la \mathbf e v, x v\ra_E=0\,\, \textrm{ for all } x\in \o(E)^{\mathbf e}.
\end{eqnarray*}
Thus if $v\in E(\mathbf e)$, then we have $\la \mathbf e v, \mathbf{e}_i^{2k+1}  v\ra_E=0$ for all $1\leq i\leq s$ and $k\geq 0$, and so $v$ must be in $E_{\mathbf h}^{+}+E_{\mathbf h}^0$.

On the other hand, every element $x\in \o(E)^{\mathbf e}$ stabilizes $E_{\mathbf h}^{+}+E_{\mathbf h}^0$. Therefore $v\in E_{\mathbf h}^{+}+E_{\mathbf h}^0$ implies that $\la \mathbf e v, x v\ra_E=0$. This finishes the proof.
\end{proof}

Denote by
\begin{equation}
\label{dnull}
  \Gamma_E:=\{v\in E\mid \la v,v\ra_E=0\}
\end{equation}
the null cone of $E$. Equip $E_\R=E$ with the (symmetric or
skew-symmetric) bilinear form
\[
  \la u,v\ra_{E_\R}:=\tr_{A/\R}(\la u,v\ra_E), \quad u,v\in E,
\]
where $\tr_{A/\R}: A\rightarrow \R$ is the usual trace map.

Put
\begin{equation} \label{dve}
  \CV_{E,\mathbf e}:=\con^{-\xi}(E; E(\mathbf e)\cap\Gamma_E, E(\mathbf e)\cap
  \Gamma_E)^{\oZ(E)},
\end{equation}
where and as usual, a superscript by a group indicates the group
invariants. This space arises naturally when one carries out
the reduction within the null cone. See Lemma \ref{support}.

For any finite dimensional real vector space $F$ and any $x\in
\End_\R(F)$, denote by $\epsilon_{F,x}$ the vector field on $F$
whose tangent vector at $v\in F$ is $xv$. When $x=1$ is the identity
operator, this is the usual Euler vector field
$\epsilon_F:=\epsilon_{F,1}$.

The main result of this section is the following

\begin{prpl}\label{glk}
The vector field $\epsilon_{E,\mathbf h}$ acts semisimply on
$\CV_{E,\mathbf e}$, and all its eigenvalues are nonnegative
integers.
\end{prpl}

If $A$ is a field, then
\begin{eqnarray*}
  &&\CV_{E,\mathbf e}\subset\con^{-\xi}(E; E(\mathbf e), E(\mathbf e))\\
  &&\phantom{\CV_{E,\mathbf e}}= \con^{-\xi}(E; E_{\mathbf h}^{+}\oplus E_{\mathbf h}^0,E_{\mathbf h}^{+}\oplus
  E_{\mathbf h}^0)\quad (\textrm{Lemma \ref{Ee}})\\
  &&\phantom{\CV_{E,\mathbf e}}=\C[E_{\mathbf h}^{+}]\otimes \con^{-\xi}(E_{\mathbf h}^{-};\{0\})\otimes \con^{-\xi}(E_{\mathbf h}^0) \quad (\textrm{Proposition \ref{lemf3}}),
\end{eqnarray*}
and Proposition \ref{glk} follows easily.

Otherwise assume that $(A,\tau)=(\K\times \K, \tau_\K)$, where
$\K=\R$ or $\C$. Up to an isomorphism, every $\epsilon$-Hermitian
$A$-module is of the form
\[
  (E,\la\,,\,\ra_E)=(\K^n\oplus \K^n, \la\,,\,\ra_{\K,n}), \quad n\geq 0,
\]
where $\K^n$ is considered as
a space of column vectors, and the $\epsilon$-Hermitian form $\la\,,\,\ra_{\K,n}$ is given
by
\[
  \left(\left[
          \begin{array}{c}
           u\\
           v
          \end{array}
  \right],
   \left[\begin{array}{c}
           u'\\
           v'
         \end{array}
  \right]\right)\mapsto ({v'}^t u, \epsilon {u'}^t v).
\]
Then
\[
  \oU(E)=\left\{\left[\begin{array}{cc}
                   g&0\\
                   0&g^{-t}\\
              \end{array}\right]\mid g\in
              \GL_n(\K)\right\}=\GL_n(\K),
\]
and
\[
  \u(E)=\left\{\left[\begin{array}{cc}
                   x&0\\
                   0&-x^t\\
              \end{array}\right]\mid x\in
              \gl_n(\K)\right\}=\gl_n(\K).
\]

In this case every distinguished nilpotent element of $\su(E)$ is
principal and so we may assume that
\[\mathbf e=\left[\begin{smallmatrix}0&1&0&\cdots &0&0\\
                                0&0&1&\cdots &0&0\\
                                 &&\cdots&\cdots&&\\
                                0&0&0&\cdots &0&1\\
                                0&0&0&\cdots &0&0\end{smallmatrix}\right]
                                \]
and
\[
 \mathbf h=\diag(n-1, n-3, \cdots, 3-n,1-n).
\]
As in the proof of Lemma \ref{Ee}, one easily calculates $E(\mathbf e)$ and one has
\begin{equation}\label{deffi}
  E(\mathbf e)\cap\Gamma_E=\cup_{i=0}^n F_i,\,\,\textrm{
  where }\,
   F_i=(\K^i\oplus\{0\}^{n-i})\oplus (\{0\}^i\oplus \K^{n-i}).
\end{equation}
Proposition \ref{quad} implies that
\[
  \CV_{E,\mathbf e}=\con^{-\xi}(E; E(\mathbf e)\cap\Gamma_E, E(\mathbf e)\cap
  \Gamma_E)^{\oZ(E)}=\bigoplus_{i=0}^n\con^{-\xi}(E; F_i,F_i)^{\oZ(E)}.
\]

To finish the proof of Proposition \ref{glk}, it therefore suffices to prove the following

\begin{leml}\label{egl}
The vector field $\epsilon_{E,\mathbf h}$
acts semisimply on $\con^{-\xi}(E; F_i,F_i)^{\oZ(E)}$, and all its eigenvalues are nonnegative integers ($0\leq i\leq n$).
\end{leml}

\begin{proof}
We prove the lemma for $\K=\R$. The complex case is proved in the
same way.

Denote by $x_1,x_2,\cdots,x_n, \,y_1,y_2,\cdots, y_n$ the standard
coordinates of $\R^n\oplus \R^n$, and write $
  \partial_j=\frac{\partial}{\partial x_j}$ and $d_j=\frac{\partial}{\partial
  y_j}$, for $j=1,2,\cdots, n$.

By Lemma \ref{lemf1}, the space $\con^{-\xi}(E;F_i,F_i)$ has a
basis consisting of generalized functions of the form
\begin{eqnarray*}
  &&f=x_1^{a_1}x_2^{a_2}\cdots x_i^{a_i}\, y_{i+1}^{b_{i+1}} y_{i+2}^{b_{i+2}} \cdots y_n^{b_n} \\
  &&\quad \otimes  \partial_{i+1}^{a_{i+1}-1} \partial_{i+2}^{a_{i+2}-1} \cdots
  \partial_n^{a_n-1}\, d_1^{b_1-1}d_2^{b_2-1}\cdots d_i^{b_i-1}
  \delta_{F'_i},
\end{eqnarray*}
where $a_1,...,a_i,b_{i+1},...,b_n$ are nonnegative integers, and
the rest of $a$'s and $b$'s are positive integers. Here $\delta_{F'_i}$ is a fixed Dirac function on the space
\[
  F'_i:=(\{0\}^i\oplus \R^{n-i})\oplus (\R^i\oplus\{0\}^{n-i}), \quad \textrm{(a complement of $F_i$).}
\]

The generalized function $f$ as above is an eigenvector for both
$\oZ(E)$ and $\epsilon_{E,\mathbf h}$. For $f$ to be
$\oZ(E)$-invariant, we must have
\[
   \sum_{j\leq i} (a_j+b_j)=\sum_{j> i} (a_j+b_j).
\]
Then the $\epsilon_{E,\mathbf h}$-eigenvalue of $f$ is
\begin{eqnarray*}
  &&\sum_{j\leq i}(n-(2j-1))a_j - \sum_{j>i}(n-(2j-1))a_j\\
  &&+\sum_{j\leq i}(n-(2j-1))b_j - \sum_{j>i}(n-(2j-1))b_j\\
    &&\!\geq \!(n-2i)(\sum_{j\leq i}a_j)- (n-2i)(\sum_{j>i}a_j)+(n-2i)(\sum_{j\leq i}b_j)- (n-2i)(\sum_{j>i}b_j)\\
    &&=0.
\end{eqnarray*}
\end{proof}

\section{Reduction within the null cone: distinguished orbits}
\label{sec:nullcone}

We continue to assume that $(A,\tau)$ is simple. Since Theorem \ref{liealg} is trivial when $E=0$, we will assume that $E$ is nonzero. Denote by $\CN_E\subset \su(E)$ the null cone, which consists of all nilpotent elements in $\su(E)$. Let
\begin{equation}\label{filtn}
  \CN_E=\CN_0\supset \CN_1\supset \cdots \supset \CN_r=\{0\}\supset
  \CN_{r+1}=\emptyset
\end{equation}
be a filtration of $\CN_E$ by its closed subsets so that each
difference
\[
  \CO_i:=\CN_i\setminus \CN_{i+1},\quad 0\leq i\leq r,
\]
is a $\breve{\oU}(E)$-orbit (which is also a $\oU(E)$-orbit). In this and the next section, we shall prove the following
\begin{prp}\label{indn} If every element of $\con^{-\xi}_{\chi_E}(\su(E)\times
E)$ is supported in $\CN_i\times \Gamma_E$, for some fixed $0\leq i\leq r$,
then every element of $\con^{-\xi}_{\chi_E}(\su(E)\times
E)$ is supported in $\CN_{i+1}\times \Gamma_E$.
\end{prp}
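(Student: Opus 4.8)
The proposition is a one-step inductive descent: knowing that all $\chi_E$-equivariant tempered generalized functions on $\u(E)\times E$ live over the closed set $(\z(E)+\CN_i)\times\Gamma_E$, I want to peel off the open orbit $\CO_i=\CN_i\setminus\CN_{i+1}$ and show nothing is supported there (after translating by the center $\z(E)$). The standard mechanism is a localization/stratification argument: writing $W=(\z(E)+\CN_i)\times\Gamma_E$ and $W'=(\z(E)+\CN_{i+1})\times\Gamma_E$, the complement $W\setminus W'$ fibers over the single orbit $\CO_i$, so by Frobenius reciprocity (the ``inductive'' step in the style of Bernstein--Zelevinsky, as in \cite{AGRS}) the space of equivariant generalized functions supported on $W\setminus W'$ is controlled by equivariant generalized functions on the normal slice to $\CO_i$ inside $\u(E)\times E$, twisted by the action of the stabilizer $\oU(E)^{\mathbf e}$ (and its $\tilde{}$-extension) on the normal space. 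Concretely, fix $\mathbf e\in\CO_i$ with standard triple $\mathbf h,\mathbf e,\mathbf f$, let $Q=\oU(E)^{\mathbf e}$ be its centralizer, let $\tilde Q$ be its preimage in $\tilde{\oU}(E)$, and use the transversal slice $(\mathbf e+\g^{\mathbf f})\times E$ (Slodowy-type slice) to reduce the vanishing of $\con^{-\xi}_{\chi_E}(\u(E)\times E;W\setminus W')$ to the vanishing of a space of $\chi$-equivariant tempered generalized functions on $\g^{\mathbf f}\times E$ supported near $0\times\Gamma_E$, on which $Q$ acts.

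\textbf{Key steps.} First I would set up the slice: by $\SL_2$-theory, $\u(E)=[\u(E),\mathbf e]\oplus\u(E)^{\mathbf f}$, and a neighborhood of $\CO_i$ in $\u(E)$ is, $\oU(E)$-equivariantly, a fiber bundle over $\CO_i$ with fiber the slice $S:=\mathbf e+\u(E)^{\mathbf f}$; correspondingly a neighborhood of $\CO_i\times\Gamma_E$ in $\u(E)\times E$ is induced from the $Q$-space $S\times E$. Then, by the localization principle for equivariant generalized functions (Bernstein; see also \cite{AGRS, JSZ}), $\con^{-\xi}_{\chi_E}(\u(E)\times E;W)$ has a two-step filtration whose quotient by $\con^{-\xi}_{\chi_E}(\u(E)\times E;W')$ injects into $\con^{-\xi}_{\tilde\chi}(S\times E;\,\{\mathbf e\}\times\Gamma_E)$ for the appropriate character $\tilde\chi$ of $\tilde Q$. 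Next I would use the conic structure: the contracting $\mathbf h$-grading lets me further strip down to the central fiber, so it suffices to show that $\con^{-\xi}_{\tilde\chi}(\u(E)^{\mathbf f}\times E;\{0\}\times\Gamma_E)$, or rather its image under the homogeneity constraint coming from the Euler field $\epsilon_{E,c_A}$ acting, reduces to something handled by Section~\ref{sec:rigid} or Section~\ref{sec:eigenvalue}. Here the dichotomy of the proposition's proof sketch in the introduction enters: if $\mathbf e$ is \emph{not} distinguished in $\su(E_0)$ for the relevant subspace $E_0$ it determines, then a nonzero semisimple element centralizing $\mathbf e$ produces, via Lemma~\ref{xue}(c) type arguments, that the relevant tangent/normal spaces sit inside a proper nondegenerate subspace, so the support is piecewise metrically proper and Lemma~\ref{lapl} kills it; if $\mathbf e$ \emph{is} distinguished, I invoke Proposition~\ref{glk} (via Proposition~\ref{quad}): the pertinent space is $\CV_{E(\mathbf e)-\text{part}}\cong\CV_{E,\mathbf e}$-flavored, the weighted Euler field $\epsilon_{E,\mathbf h}$ acts semisimply with nonnegative integer eigenvalues, while the $\chi_E$-equivariance together with $\epsilon_{E,c_A}f=0$ forces a homogeneity that is incompatible unless $f=0$ (the $(g,-1)$-twist flips the sign of $\epsilon_{E,\mathbf h}$-eigenvalues, forcing them to be simultaneously $\geq 0$ and $\leq 0$, hence $0$, and then the ``pure $0$-eigenvalue'' piece is excluded by a boundary-orbit argument, as at the end of Section~\ref{sec:eigenvalue} where the $i=0,n$ cases vanish).

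\textbf{Main obstacle.} The delicate point is the passage from the global equivariant space on $\u(E)\times E$ to the slice space on $S\times E$ with the correct character of the \emph{extended} stabilizer $\tilde Q$: one must check that the element realizing $\chi_E=-1$ (an anti-isometry intertwining $A$ with $\tau$) can be chosen to preserve the slice through $\mathbf e$ and to normalize $Q$, and track precisely how it acts on the normal space $\u(E)^{\mathbf f}\times E$, since the sign of the $\epsilon_{E,\mathbf h}$-eigenvalues under this twist is exactly what forces vanishing. A second, more technical obstacle is verifying that $E(\mathbf e)$ (from \eqref{dee}) is compatible with the slice reduction — i.e., that after Harish-Chandra-type descent the ``$E$-direction'' support really is confined to $E(\mathbf e)\cap\Gamma_E$ so that one lands inside $\CV_{E,\mathbf e}$ and Proposition~\ref{glk} applies; this is where Lemma~\ref{xue} and the structure of $\psi_{x,v}$ do the essential bookkeeping. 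Modulo these, the localization-plus-eigenvalue scheme closes the induction.
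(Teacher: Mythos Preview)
Your overall architecture is right: reduce to $\su(E)\times E$ via localization, then split on whether $\CO_i$ is distinguished, use a Slodowy-type slice through $\mathbf e$, and for the distinguished case feed into Proposition~\ref{glk}. The non-distinguished case is also essentially correct: the paper shows (Lemma~\ref{metricp}) that a non-distinguished orbit is metrically proper in $\s=\su(E)$, and since the partial Fourier transform $\CF_\s(f)$ is again $\chi_E$-equivariant and hence supported in $\CN_i\times\Gamma_E$, some power of $\Delta_\s$ kills $f$, so Lemma~\ref{lapl} finishes.

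The distinguished case, however, has a genuine gap. Your proposed endgame---use an element $(g,-1)\in\tilde{\oU}(E')$ to flip the sign of $\epsilon_{E,\mathbf h}$-eigenvalues, conclude they are simultaneously $\geq 0$ and $\leq 0$, then exclude the zero-eigenspace by a ``boundary-orbit argument''---does not close. First, producing a $(g,-1)$ that fixes the slice through $\mathbf e$ and conjugates $\mathbf h$ to $-\mathbf h$ is not automatic. Second, and more seriously, even if the eigenvalues are forced to be zero, the zero-eigenspace of $\epsilon_{E,\mathbf h}$ on $\CV_{E,\mathbf e}$ is \emph{not} zero in general (in Lemma~\ref{egl} it is nonzero for $0<i<n$), so your final ``exclusion'' step has no content. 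The paper never uses the $(g,-1)$-twist at this stage.

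What the paper actually does in the distinguished case is a Weyl-algebra $\sl_2$ argument on the $\s$-variable, not on the $E$-variable. After the support lemma (Lemma~\ref{support}, which uses the AGRS map $\eta_t(x,v)=(x-t\psi_{x,v},v)$ to confine the $E$-support over $\CO_i$ to $E(\mathbf e)\cap\Gamma_E$---this is the mechanism behind your ``second obstacle''), the restriction $r_{\s\times E}(f)$ lands in the space $\CV_{\s\times E,\CO_i}$. On this space one studies the Euler field $\epsilon_\s$: via the slice map $\theta$ it is $\theta$-related to $\epsilon_{\s^{\mathbf f},\,1-\ad(\mathbf h/2)}-\epsilon_{E,\mathbf h/2}$ (Lemma~\ref{thetar}). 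The first summand has eigenvalues $<-\tfrac{1}{2}\dim_\R\s$ on $\con^{-\xi}(\s^{\mathbf f};\{0\})$ (Lemma~\ref{eulersmall}), and by Proposition~\ref{glk} the second summand contributes $\leq 0$; hence $\epsilon_\s$ has eigenvalues $<-\tfrac{1}{2}\dim_\R\s$ on $\CV_{\s\times E,\CO_i}$ (Lemma~\ref{negative}). Now use the standard triple $\epsilon_\s+\tfrac{1}{2}\dim_\R\s,\ -\tfrac{1}{2}q_\s,\ \tfrac{1}{2}\Delta_\s$ in $\oW[\s]$: negativity of the $\mathbf h$-element forces $\Delta_\s$ to be \emph{injective} on $\CV_{\s\times E,\CO_i}$. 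But the hypothesis applied to $\CF_\s(f)$ shows some power of $\Delta_\s$ annihilates $r_{\s\times E}(f)$; injectivity then gives $r_{\s\times E}(f)=0$. So the role of Proposition~\ref{glk} is not to set up a sign-flip contradiction, but to ensure the $E$-contribution to the $\epsilon_\s$-eigenvalue does not spoil the strict negativity coming from the $\s^{\mathbf f}$-direction.
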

\vsp

For the ease of notation, denote $\s:=\su(E)$. We shall view $\s$ as a non-degenerate real quadratic space via the form
\[
  \la x,y\ra_\s:=\tr_{A/\R}(\tr_A(xy)).
\]

It is easily verified (and important for us) that the partial
Fourier transforms $\CF_E$ and $\CF_\s$ both preserve the space
$\con^{-\xi}_{\chi_E}(\s\times E)$.

\begin{lemp}
Proposition \ref{indn} holds when $\s=0$.
\end{lemp}
\begin{proof} For $\s=0$, the assumption of Proposition \ref{indn} implies that $\con^{-\xi}_{\chi_E}(\s \times E)\subset \con^{-\xi}(E;\Gamma_E,\Gamma_E)^{\oZ(E)}$.
The latter space is easily checked to be zero.
\end{proof}

\vsp For the remaining part of this section, assume that $\s\neq 0$.

Before proceeding further, we introduce a version of pull back of
generalized functions.

\begin{dfnp}\label{submersive}
Let $Z$ and $Z'$ be locally closed subsets of manifolds $M$ and
$M'$, respectively. A smooth map $\phi:M\rightarrow M'$ is said to
be submersive from $Z$ to $Z'$ if
\begin{itemize}
  \item
    $\phi$ is submersive at every point of $Z$, and
  \item
     for every $z\in Z$, there is an open neighborhood $U$ of $z$
in $M$ such that
\[
   \phi^{-1}(Z')\cap U=Z\cap U.
\]
\end{itemize}
\end{dfnp}

The following lemma is elementary.

\begin{lemp}\label{lpullback}
If  $\phi:M\rightarrow M'$ is submersive from $Z$ to $Z'$, as in
Definition \ref{submersive}, then there is a unique linear map
\begin{equation}\label{pullback}
  \phi^*: \con^{-\infty}(M';Z')\rightarrow
  \con^{-\infty}(M;Z),
\end{equation}
with the following property: for any open subset $U$ of $M$ and
$U'$ of $M'$, if
\begin{itemize}
 \item
 $\phi$ restricts to a submersive map $\phi_U: U\rightarrow U'$,
\item $Z'\cap U'$ is closed in $U'$, and \item
 $
  \phi_U^{-1}(Z'\cap U')=Z\cap U,
 $
\end{itemize}
then the diagram
\[
  \begin{CD}
             \con^{-\infty}(M';Z')@>\phi^*>>\con^{-\infty}(M;Z)\\
            @VVV           @VVV\\
            \con^{-\infty}(U';Z'\cap U') @> \phi_U^*>> \con^{-\infty}(U;Z\cap U)
  \end{CD}
\]
commutes, where the two vertical arrows are restrictions, and the
bottom arrow is the usual pull back map of generalized functions
via a submersion.
\end{lemp}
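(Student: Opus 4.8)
The plan is to construct the pull-back map locally and then patch, relying on the fact that submersions locally look like projections. First I would reduce to a local statement: since $\phi$ is submersive at every point of $Z$, for each $z\in Z$ I can choose coordinates in which $\phi$ becomes a linear projection $\R^{n}\to\R^{m}$, and by the second condition in Definition \ref{submersive} I can shrink the neighborhood so that $\phi^{-1}(Z')\cap U = Z\cap U$ with $Z'\cap\phi(U)$ closed in $\phi(U)$. On such a chart the ordinary pull-back of generalized functions via a submersion is defined (tensoring with $1$ in the fiber coordinates, via the identification $\con^{-\infty}(U)=\con^{-\infty}(\phi(U))\widehat\otimes\con^{-\infty}(\text{fiber})$), and it visibly carries $\con^{-\infty}(\phi(U);Z'\cap\phi(U))$ into $\con^{-\infty}(U;Z\cap U)$ because the preimage of $Z'$ is exactly $Z$ on $U$. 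This gives the local existence and, simultaneously, the commuting-square property on that chart.

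Next I would establish uniqueness, which is the cheap part: the restriction maps $\con^{-\infty}(M;Z)\to\con^{-\infty}(U;Z\cap U)$ are injective when the $U$'s cover a neighborhood of $Z$, since a generalized function supported in $Z$ is determined by its restrictions to an open cover of any open set in which $Z$ is closed; so any map satisfying the stated compatibility with the honest submersive pull-backs on charts is forced. For existence globally, I would patch the local pull-backs: on overlaps $U_1\cap U_2$ the two locally-defined pull-backs agree because they both agree with the usual submersion pull-back for the restricted map $\phi|_{U_1\cap U_2}$ (the usual pull-back is itself local and compatible with restriction), so they glue to a well-defined element of $\con^{-\infty}(M;Z)$ — here one uses that $Z$ is closed in some fixed open $U\supset Z$ and that $\con^{-\infty}(U;Z)$ is a sheaf-theoretic object, i.e. a generalized function with prescribed support can be assembled from compatible local pieces. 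Finally the required commuting diagram for arbitrary admissible $U,U'$ follows by restricting further to the construction charts and invoking uniqueness.

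The main obstacle is bookkeeping around the support condition rather than anything deep: one must make sure that when passing to a submersion chart the set $\phi^{-1}(Z')$ really coincides with $Z$ on the chart (not merely contains it), which is exactly what the second bullet of Definition \ref{submersive} guarantees, and that the patched object lies in $\con^{-\infty}(M;Z)$ and not just in some larger space — this needs the observation that outside $Z$ the local pull-backs are zero, so their supports are contained in $Z$. Everything else is the standard formalism of pull-back of distributions along submersions (constant in the fiber direction), applied chart by chart. I would keep the write-up short by stating "the assertion is local on $M$ and follows from the usual submersion pull-back in local coordinates, together with the uniqueness forced by restriction to an open cover of $Z$."
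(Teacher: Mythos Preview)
Your proposal is correct and is the standard way one would prove this lemma: reduce to local submersion charts where the pull back is the usual one, observe that the second condition in Definition \ref{submersive} forces supports to land in $Z$, and patch using that generalized functions with support in $Z$ form a sheaf on any open set containing $Z$ as a closed subset. The paper itself does not actually prove this statement; it simply calls it ``elementary'' and refers to \cite[Lemma 8.A.2.5]{W1} for the underlying pull back of generalized functions via a submersion, so there is nothing to compare.
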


The map $\phi^*$ in (\ref{pullback}) is still called the pull back.
It is injective if $\phi(Z)=Z'$. In this case, we say that $\phi$ is
submersive from $Z$ onto $Z'$. If $M$, $M'$ are Nash manifolds, $Z$,
$Z'$ are locally closed semialgebraic subsets, and $\phi$ is a Nash
map which is submersive from $Z$ to $Z'$, then $\phi^*$ maps
$\con^{-\xi}(M';Z')$ into $\con^{-\xi}(M;Z)$ (cf. \cite[Section
B.2]{AGS1}).

\vsp

We continue the proof of Proposition \ref{indn}.

Fix $i\in \{0,1,\cdots,r\}$ and assume that $\CO_i$ is
distinguished, namely, some (so all) elements of it are
distinguished. We use the notation of last section. Put
\begin{equation}
   \CZ_i:=(\CN_{i+1}\times \Gamma_E)\cup (\bigsqcup_{\mathbf{e}\in \CO_i}
  \{\mathbf{e}\}\times  (E(\mathbf e)\cap\Gamma_E)).
\end{equation}
One checks that $\CZ_i$ is a closed  semialgebraic subset of
$\s\times E$.

\begin{lemp}\label{support} Assume that every element of
$\con^{-\xi}_{\chi_E}(\s\times E)$ is supported in $\CN_i\times
\Gamma_E$. Then every $f\in \con^{-\xi}_{\chi_E}(\s\times E)$ is
supported in $\CZ_i$.
\end{lemp}

\begin{proof} We follow the method of \cite{AGRS}.
For every $t\in \R$, define a map
\[
 \begin{array}{rcl}
  \eta:=\eta_t: \s\times E &\rightarrow &\s\times E,\\
                 (x,v)&\mapsto& (x-t\phi_{x,v},v),
 \end{array}
\]
which is checked to be submersive from $\s\times \Gamma_E$ to
$\s\times \Gamma_E$. Therefore, by Lemma \ref{lpullback}, it
yields a pull back map
\[
   \begin{array}{rcl}
   \eta^*: \con^{-\infty}(\s\times E;\s\times \Gamma_E) &\rightarrow
   &\con^{-\infty}(\s\times E;\s\times \Gamma_E).\\
  \end{array}
\]

Fix $f\in\con^{-\xi}_{\chi_E}(\s\times E)$. By our assumption,
\[
  f\in \con^{-\xi}_{\chi_E}(\s\times E;\CN_{i}\times \Gamma_E)
  \subset \con^{-\xi}_{\chi_E}(\s\times E;\s\times \Gamma_E).
\]
Since the map $\eta$ is algebraic and
$\breve{\oU}(E)$-equivariant,
\[
  \eta^* (f)\in \con^{-\xi}_{\chi_E}(\s\times E;\s\times \Gamma_E).
\]

Let $(\mathbf{e},v)\in
\CO_{i}\times \Gamma_E$ be a point in the support of $f$. It is routine to check that $\eta$ restricts to a bijection from
$\s\times \Gamma_E$ onto itself.   Denote
by
\[
  \mathbf e':=\mathbf e'(\mathbf e,v,t)\in \s
\]
the unique element so that
\[
  \eta(\mathbf{e}',v)=(\mathbf{e},v).
\]
Then $(\mathbf e',v)$ is in the support of $\eta^*(f)$, and
therefore our assumption implies that
\begin{equation}\label{eprime}
  \mathbf e'\in \CN_i.
\end{equation}

An easy calculation shows that
\[
  \mathbf{e}'=\left\{\begin{array}{ll}
             \mathbf{e}+t \phi_{\mathbf{e},v}+ t^2 \phi_v\mathbf{e} \phi_v,\quad&
      \textrm{if $(A,\tau;\epsilon)$ is of othogonal type},\smallskip\\        \mathbf{e}+t\phi_{\mathbf{e},v},\quad&\textrm{otherwise.}\\
                \end{array}
      \right.
\]
Since $\CO_i$ is open in $\CN_i$, (\ref{eprime}) implies that
\[
  \phi_{\mathbf{e},v}=\frac{d}{dt}|_{t=0}\, \mathbf{e}'(\mathbf e,v,t)\in
  \oT_{\mathbf{e}}(\CO_i)=[\u(E), \mathbf{e}]=[\s,\mathbf{e}],
\]
i.e., $v\in E(\mathbf e)$, and the proof is now complete.
\end{proof}

\vsp Denote by
\begin{equation}
\label{dcv}
  \CV_{\s\times E,\CO_i}\subset \con^{-\xi}(\s\times E; \CO_i\times
  E)^{\oU(E)}
\end{equation}
the subspace consisting of those $f$ such that both $f$ and
$\CF_E(f)$ are supported in $\bigsqcup_{\mathbf{e}\in \CO_i}
\{\mathbf{e}\}\times (E(\mathbf{e})\cap \Gamma_E)$.

\begin{prpp}\label{negative}
The Euler vector field $\epsilon_{\s}$ acts
semisimply on $\CV_{\s\times E,\CO_i}$, and all its eigenvalues are
real numbers $<-\frac{1}{2} \dim_\R \s$.
\end{prpp}

Let us first prove the following
\begin{lemp}
Proposition \ref{negative} implies Proposition \ref{indn} when
$\CO_i$ is distinguished.
\end{lemp}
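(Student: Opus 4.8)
The plan is to combine the support constraint of Lemma \ref{support} with the eigenvalue estimate of Lemma \ref{negative}, using a homogeneity (Euler vector field) argument along the distinguished orbit $\CO_i$. First I would fix $f \in \con^{-\xi}_{\chi_E}(\s \times E)$; by Lemma \ref{support}, its support lies in $(\CN_{i+1}\times \Gamma_E)\cup(\bigsqcup_{\mathbf e \in \CO_i}\{\mathbf e\}\times (E(\mathbf e)\cap \Gamma_E))$. Since $\CN_{i+1}$ is closed and $\CO_i = \CN_i \setminus \CN_{i+1}$ is open in $\CN_i$, it suffices to show that the restriction of $f$ to the open set $(\s \setminus \CN_{i+1})\times E$ — where the only relevant part of the support is $\bigsqcup_{\mathbf e \in \CO_i}\{\mathbf e\}\times (E(\mathbf e)\cap \Gamma_E)$ — vanishes. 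Equivalently, I want to show the space of such $f$, supported in the ``conormal-type'' set over $\CO_i$, is zero.

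Next I would extract from such an $f$ an element of the space $\CV_{\s\times E,\CO_i}$ appearing in Lemma \ref{negative}. The support of $f$ over $\CO_i$ is already of the form $\bigsqcup_{\mathbf e\in\CO_i}\{\mathbf e\}\times(E(\mathbf e)\cap\Gamma_E)$; one must also check the partial Fourier transform $\CF_E(f)$ has support in the same set. This should follow because $E(\mathbf e)\cap\Gamma_E$ is a union of totally isotropic subspaces of $E$ of the right dimension (this is exactly the structure exploited in Proposition \ref{quad} and the Type II computation of Lemma \ref{egl}), so that the condition defining $\con^{-\xi}(E;\cdot,\cdot)$ is forced by the Laplacian annihilation — indeed some power of $\Delta_E$ kills $f$ since its $E$-support lies in $\Gamma_E$, hence $\CF_E(f)$ is supported in $\Gamma_E$, and a metric-properness argument on the complement (Lemma \ref{lapl}) pins it to $E(\mathbf e)\cap\Gamma_E$. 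Thus the restriction of $f$ to $(\s\setminus\CN_{i+1})\times E$ lands in (a version of) $\CV_{\s\times E,\CO_i}$.

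Now comes the decisive step: the group $\tilde{\oU}(E)$ (or already $\oU(E)$) contains the one-parameter scaling group generated by $\epsilon_{\s}$ only up to the subtlety that $\epsilon_{\s}$ is an Euler field, not a group element — but the standard trick is that $\chi_E$-equivariance together with the orbit structure forces a precise homogeneity. More concretely: because $\CO_i$ is a single nilpotent orbit and distinguished, the associated standard triple gives a $\BG_m$-action (via $\mathbf h$) under which $\CO_i$ is stable and which, combined with the $\oU(E)$-invariance, shows that $f$ restricted over $\CO_i$ must be an eigenvector (or a sum of generalized eigenvectors) for $\epsilon_{\s}$ with eigenvalue determined by the self-duality of the pairing — namely, any nonzero invariant generalized function supported on $\CO\times E$-type sets that is also its own (partial) Fourier transform support-wise is forced to be homogeneous of degree $-\tfrac12\dim_\R\s$ along $\s$, by the standard computation that $\CF_\s$ intertwines $\epsilon_\s$ with $-\epsilon_\s - \dim_\R\s$. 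But Lemma \ref{negative} says every $\epsilon_\s$-eigenvalue on $\CV_{\s\times E,\CO_i}$ is strictly less than $-\tfrac12\dim_\R\s$, which is incompatible with being equal to $-\tfrac12\dim_\R\s$ (or with symmetry of the eigenvalue spectrum under $\lambda\mapsto -\dim_\R\s-\lambda$, which would force an eigenvalue $>-\tfrac12\dim_\R\s$ as well). Hence $\CV_{\s\times E,\CO_i}=\{0\}$, so $f$ vanishes over $\CO_i$, and therefore $f$ is supported in $\CN_{i+1}\times\Gamma_E$, as required.

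The main obstacle I anticipate is making the homogeneity/Fourier-symmetry argument precise: one must verify that a $\chi_E$-equivariant $f$ whose support over $\CO_i$ is of conormal type, together with the same property for $\CF_\s(f)$, genuinely produces an $\epsilon_\s$-eigenvector (after possibly decomposing into generalized eigenspaces, using that $\epsilon_\s$ acts semisimply by Lemma \ref{negative}), and that the Fourier transform along $\s$ preserves the space $\CV_{\s\times E,\CO_i}$ — this last point uses that $\CO_i$ lies in the null cone of $\s$ so that $\CF_\s$ does not move the $\s$-support out of the cone, plus a metric-properness reduction on $\CN_i\setminus\CO_i = \CN_{i+1}$ to confine $\CF_\s(f)$ to over $\CO_i$ again. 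Once $\CF_\s$ stabilizes $\CV_{\s\times E,\CO_i}$, it conjugates $\epsilon_\s$ to $-\epsilon_\s-\dim_\R\s$, so the eigenvalue multiset is symmetric about $-\tfrac12\dim_\R\s$; combined with Lemma \ref{negative}'s strict bound this forces the space to be zero.
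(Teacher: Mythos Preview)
Your overall architecture --- restrict $f$ to the open stratum $\CO_i\times E$, land in $\CV_{\s\times E,\CO_i}$, then exploit the eigenvalue bound of Lemma \ref{negative} --- is correct, and matches the paper. But the mechanism you propose for the last step does not work, and the paper uses a different one.

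The gap is your claim that $\CF_\s$ stabilizes $\CV_{\s\times E,\CO_i}$. Elements of $\CV_{\s\times E,\CO_i}$ are, by definition, tempered generalized functions on an open set of $\s\times E$ in which $\CO_i\times E$ is closed; they are local objects, and $\CF_\s$ (a global integral transform on all of $\s$) simply does not act on them. Your suggested fix --- extend, apply $\CF_\s$, then use metric properness of $\CN_{i+1}$ to push the support back to $\CO_i$ --- fails on two counts: extension by zero across $\CN_{i+1}$ is not available in general, and $\CN_{i+1}$ need not be piecewise metrically proper (it may contain further distinguished orbits). So the eigenvalue-symmetry argument via $\CF_\s$ cannot be made to run, and in particular you should not expect to prove $\CV_{\s\times E,\CO_i}=\{0\}$; the paper does not prove this.

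What the paper does instead is replace the global transform $\CF_\s$ by the differential operator $\Delta_\s$, which \emph{is} local and hence does act on $\CV_{\s\times E,\CO_i}$. The triple $\epsilon_\s+\tfrac12\dim_\R\s$, $-\tfrac12 q_\s$, $\tfrac12\Delta_\s$ is a standard $\sl_2$-triple in $\oW[\s]$, each member preserving $\CV_{\s\times E,\CO_i}$. By Lemma \ref{negative} the first operator is semisimple with strictly negative eigenvalues, and elementary $\sl_2$-theory (the paper cites \cite[Lemma 8.A.5.1]{W1}) then forces $\Delta_\s$ to be \emph{injective} on $\CV_{\s\times E,\CO_i}$. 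Now the global input enters exactly once: since $\CF_\s(f)\in\con^{-\xi}_{\chi_E}(\s\times E)$ is, by hypothesis, supported in $\CN_i\times\Gamma_E$ (inside the null cone of $\s$), some power of $\Delta_\s$ annihilates $f$, hence annihilates $r_{\s\times E}(f)\in\CV_{\s\times E,\CO_i}$. Injectivity of $\Delta_\s$ gives $r_{\s\times E}(f)=0$.

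A smaller point: your argument that $r_{\s\times E}(f)$ actually lies in $\CV_{\s\times E,\CO_i}$ is muddled (the sentence ``some power of $\Delta_E$ kills $f$ since its $E$-support lies in $\Gamma_E$'' has the implication backwards, and the metric-properness step you invoke on $E$ is unjustified). The clean way is to observe that $\CF_E$ commutes with the $\tilde{\oU}(E)$-action, so $\CF_E(f)\in\con^{-\xi}_{\chi_E}(\s\times E)$ as well, and one simply applies Lemma \ref{support} a second time to $\CF_E(f)$.
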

\begin{proof}
Denote by $q_\s$ the quadratic form on $\s$, i.e.,
\[
  q_{\s}(x)=\la x,x\ra_\s= \tr_{A/\R}(\tr_A(x^2)).
\]
Denote by $\Delta_\s$ the Laplacian operator associated to $q_\s$. The operators
\[
  \epsilon_\s+\frac{1}{2} \dim_\R
  \s,\quad -\frac{1}{2}q_\s,\quad\frac{1}{2}\Delta_\s
\]
form a standard triple, and each of them leaves the space
$\CV_{\s\times E, \CO_i}$ stable. Proposition \ref{negative} says
that $\epsilon_\s+\frac{1}{2} \dim_\R \s$ is semisimple and has
negative eigenvalues on $\CV_{\s\times E, \CO_i}$, and so by
\cite[Lemma 8.A.5.1]{W1}, the map
\[
  \Delta_\s: \CV_{\s\times E,\CO_i}\rightarrow \CV_{\s\times E,\CO_i}
\]
is injective.

Let $f\in \con^{-\xi}_{\chi_E}(\s\times E)$. Applying Lemma
\ref{support} to $f$ and $\CF_E
(f)$, we conclude that under the restriction map
\[
 r_{\s\times E}: \con^{-\xi}_{\chi_E}(\s\times E)\subset \con^{-\xi}(\s\times E;\CN_i\times E)\rightarrow \con^{-\xi}(\s\times E;
\CO_i\times E),
\]
the image \[r_{\s\times E}(f) \in \CV_{\s\times E,\CO_i}.\]

Since $\CF_{\s}(f)\in\con^{-\xi}_{\chi_E}(\s\times E)$ is supported in
\[
 \CN_{i}\times \Gamma_E \subset \textrm{(the null cone of the real quadratic space $\s$)}\times
  E,
\]
we conclude that $f$ and thus $r_{\s\times E}(f)$ are annihilated by some
positive power of $\Delta_\s$. By the injectivity of $\Delta_\s$
on $\CV_{\s\times E,\CO_i}$, we conclude that $r_{\s\times
E}(f)=0$ and we are done.
\end{proof}

The remaining part of this section is devoted to a proof of
Proposition \ref{negative}.

Pick any element $\mathbf e\in\CO_i$ and extend it to a standard triple
$\mathbf{h},\mathbf{e}, \mathbf{f}\in \s$. Then we have a vector
space decomposition
\[
  \s=[\s,\mathbf{e}]\oplus \s^{\mathbf{f}}.
\]

Let $\oU(E)$ act on $\oU(E)\times \s^{\mathbf f} \times E$ via the
left translation on the first factor. Define a $\oU(E)$-equivariant
map
\begin{equation}
    \begin{array}{rcl}
      \theta: \oU(E)\times \s^{\mathbf f} \times E& \rightarrow & \s\times E,\\
          (g, x,v)&\mapsto & g\cdot (x+\mathbf{e}, v).
   \end{array}
\end{equation}

\begin{lemp}
\label{thetar} The vector field
\[
  \iota_{\mathbf{h}/2}+ \epsilon_{\s^{\mathbf{f}},
  1-\ad(\mathbf{h}/2)}-\epsilon_{E,\mathbf{h}/2}
\]
on $\oU(E)\times \s^{\mathbf f} \times E$ is $\theta$-related to
the Euler vector field $\epsilon_{\s}$ on $\s\times E$, where
$\iota_{\mathbf{h}/2}$ is the left invariant vector field on
$\oU(E)$ whose tangent vector at the identity is $\mathbf{h}/2$.
\end{lemp}
\begin{proof}
Since both vector fields under consideration are
$\oU(E)$-invariant, it suffices to prove the $\theta$-relatedness
at a point of the form
\[
 \mathbf{x}:=(1,x,v)\in  \oU(E)\times \s^{\mathbf{f}}\times E.
\]
Under the differential of $\theta$ at $\mathbf{x}$, we have
\[
  \begin{array}{lcl}
      \iota_{\mathbf{h}/2}|_{\mathbf{x}}=(\mathbf{h}/2,0,0)& \mapsto& ([\mathbf{h}/2, x+\mathbf{e}], (\mathbf{h}/2)v),\smallskip\\
      \epsilon_{\s^{\mathbf{f}},1-\ad(\mathbf{h}/2)}|_{\mathbf{x}}=(0,x-[\mathbf{h}/2,x],0) &\mapsto& (x-[\mathbf{h}/2,x],0),\smallskip\\
     \epsilon_{E,\mathbf{h}/2}|_{\mathbf{x}}=(0,0,(\mathbf{h}/2)v)&\mapsto&
     (0,(\mathbf{h}/2)v).
            \end{array}
\]
This implies the lemma since
$\epsilon_{\s}|_{\theta(\mathbf{x})}=(x+\mathbf{e},0)$.
\end{proof}

Let $\oZ(E)$ act on $\s^{\mathbf f} \times E$ and $\oU(E)\times
\s^{\mathbf f} \times E$ via its action on the factor $E$. Then
the map $\theta$ is $\oZ(E)$-equivariant as well. Note that
$\theta$ is submersive from $\oU(E)\times \{0\}\times E$ onto
$\CO_i\times E$ (cf. \cite[Page 299]{W1}). Therefore it yields an
injective pull back map
\[
   \theta^*: \con^{-\xi}( \s\times E;\CO_i\times E)^{\oU(E)}\hookrightarrow
         \con^{-\xi}(\oU(E)\times \s^{\mathbf f} \times E;\oU(E)\times \{0\}\times E)^{\oU(E)\times \oZ(E)}.
\]
Denote by
\[
         r_{\s^{\mathbf f}\times E}: \con^{-\xi}(\oU(E)\times \s^{\mathbf f} \times E;\oU(E)\times \{0\}\times E)^{\oU(E)\times\oZ(E)}\rightarrow
        \con^{-\xi}(\s^{\mathbf f} \times E;\{0\}\times E)^{\oZ(E)}
\]
the linear isomorphism specified by the rule
\begin{equation}
\label{drestriction}
 f=1\otimes  r_{\s^{\mathbf f}\times E}f.
\end{equation}

Recall the space $\CV_{E, \mathbf e}\subset \con^{-\xi}(E)$ from \eqref{dve}.

\begin{lemp}\label{injective}
The composition map $r_{\s^{\mathbf f}\times E}\circ \theta^*$
sends $\CV_{\s\times E,\CO_i}$ into $\con^{-\xi}(\s^{\mathbf f};\{0\})
  \otimes \CV_{E, \mathbf e}$, and the following diagram
\[
   \begin{CD}
           \CV_{\s\times E,\CO_i}\,  &\lhook\joinrel\xrightarrow{r_{\s^{\mathbf f}\times E}\circ\theta^*}
                                         & \,\con^{-\xi}(\s^{\mathbf f};\{0\})
  \otimes \CV_{E, \mathbf e}\\
             @V\epsilon_\s VV            @VV\epsilon_{\s^\mathbf{f},1-\ad(\mathbf{h}/2)}-\epsilon_{E,\mathbf{h}/2}V                \\
           \CV_{\s\times E,\CO_i}\, &\lhook\joinrel\xrightarrow{r_{\s^{\mathbf f}\times E}\circ\theta^*}
                                         & \,\con^{-\xi}(\s^{\mathbf f};\{0\})
  \otimes \CV_{E, \mathbf e}\\
  \end{CD}
\]
commutes.
\end{lemp}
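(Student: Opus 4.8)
The plan is to read off both assertions from the explicit description of $\theta$, from Lemma \ref{thetar}, and from the single nontrivial input that each $g\in\oU(E)$ acts on $(E,\la\,,\,\ra_{E,\R})$ by an isometry. Throughout I use the properties of $\theta$ recalled before the definition of $\theta^{*}$: it is algebraic, $\oU(E)\times\oZ(E)$-equivariant, submersive from $\oU(E)\times\{0\}\times E$ onto $\CO\times E$, with $\theta^{-1}(\CO\times E)$ locally equal to $\oU(E)\times\{0\}\times E$; explicitly $\theta(g,0,v)=(g\,\mathbf e\,g^{-1},gv)$.

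\emph{Supports.} I first record the equivariance $E(g\,\mathbf e\,g^{-1})=g\cdot E(\mathbf e)$ for $g\in\oU(E)$. Indeed $\phi_{gv}=g\phi_{v}g^{-1}$ (a direct computation using that $g$ is $A$-linear and isometric), whence $\psi_{g\mathbf e g^{-1},gv}=g\,\psi_{\mathbf e,v}\,g^{-1}$ in either branch of \eqref{psixv} (for $c_{A}\neq 0$ one also uses that $c_{A}$ is central in $\gl_{A}(E)$), and $[\su(E),g\mathbf e g^{-1}]=g[\su(E),\mathbf e]g^{-1}$; the claim then follows from \eqref{dee}. Since $\Gamma_{E}$ is $\oU(E)$-stable, this gives
\[
\theta^{-1}\!\Big(\bigsqcup_{\mathbf e'\in\CO}\{\mathbf e'\}\times(E(\mathbf e')\cap\Gamma_{E})\Big)\cap\big(\oU(E)\times\{0\}\times E\big)=\oU(E)\times\{0\}\times\big(E(\mathbf e)\cap\Gamma_{E}\big),
\]
because $\theta(g,0,v)$ lies in the left-hand set exactly when $v\in E(\mathbf e)\cap\Gamma_{E}$. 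Hence, for $f\in\CV_{\s\times E,\CO}$, the pull-back $\theta^{*}f$ is supported in $\oU(E)\times\{0\}\times(E(\mathbf e)\cap\Gamma_{E})$; writing $\theta^{*}f=1\otimes g$ with $g=r_{\s^{\mathbf f}\times E}(\theta^{*}f)$, and using that $\theta^{*}f$ is $\oZ(E)$-invariant (as $f$ is and $\theta$ is $\oZ(E)$-equivariant), we obtain $g\in\con^{-\xi}(\s^{\mathbf f}\times E;\{0\}\times(E(\mathbf e)\cap\Gamma_{E}))^{\oZ(E)}$.

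\emph{Fourier transforms.} Next I would establish $\CF_{E}\circ\theta^{*}=\theta^{*}\circ\CF_{E}$ on $\con^{-\xi}(\s\times E;\CO\times E)$. For fixed $(g,x)$ the map $\theta$ acts on the $E$-variable by the $\la\,,\,\ra_{E,\R}$-isometry $v\mapsto gv$, and its $\s$-component $g(x+\mathbf e)g^{-1}$ does not involve $v$; so a change of variables (Jacobian $1$, and $g^{-1}$ equal to the $\la\,,\,\ra_{E,\R}$-adjoint of $g$) yields $\CF_{E}(h\circ\theta)=\theta^{*}(\CF_{E}h)$ for Schwartz $h$, and this extends to generalized functions by density and continuity. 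Since $\CF_{E}(f)$ again lies in $\CV_{\s\times E,\CO}$, the support computation of the previous step applied to $\CF_{E}(f)$ shows that $\CF_{E}(\theta^{*}f)=\theta^{*}(\CF_{E}f)$ is supported in $\oU(E)\times\{0\}\times(E(\mathbf e)\cap\Gamma_{E})$; and $\CF_{E}$ commutes with $r_{\s^{\mathbf f}\times E}$ since it leaves the $\oU(E)$-factor untouched, so $\CF_{E}(g)$ is supported in $\{0\}\times(E(\mathbf e)\cap\Gamma_{E})$. Together with the previous paragraph this is exactly the statement $g\in\CV_{\s^{\mathbf f}\times E,\mathbf e}$, which is the first assertion.

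\emph{The diagram.} By Lemma \ref{thetar} the field \eqref{vector}, say $X=\iota_{\mathbf h/2}+\epsilon_{\s^{\mathbf f},1-\ad(\mathbf h/2)}-\epsilon_{E,\mathbf h/2}$, is $\theta$-related to $\epsilon_{\s}$, so by the standard compatibility of pull-back along a submersion with $\theta$-related vector fields (see \cite{W1}) we have $\theta^{*}\circ\epsilon_{\s}=X\circ\theta^{*}$. Under the identification $F\mapsto r_{\s^{\mathbf f}\times E}(F)$, which strips off the constant function $1$ on $\oU(E)$, the summand $\iota_{\mathbf h/2}$ acts by $0$ (a nonzero left-invariant vector field annihilates the constant function), while $\epsilon_{\s^{\mathbf f},1-\ad(\mathbf h/2)}$ and $\epsilon_{E,\mathbf h/2}$ involve only the $\s^{\mathbf f}$- and $E$-directions and hence commute with $r_{\s^{\mathbf f}\times E}$; thus $r_{\s^{\mathbf f}\times E}(X(1\otimes g))=(\epsilon_{\s^{\mathbf f},1-\ad(\mathbf h/2)}-\epsilon_{E,\mathbf h/2})\,g$. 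Composing, $r_{\s^{\mathbf f}\times E}\circ\theta^{*}$ intertwines $\epsilon_{\s}$ with $\epsilon_{\s^{\mathbf f},1-\ad(\mathbf h/2)}-\epsilon_{E,\mathbf h/2}$, which is the commutativity of the square (granting the routine facts that $\CO$ is conic and $E(\mathbf e)\cap\Gamma_{E}$ is stable under scaling and under the flow of $\epsilon_{E,\mathbf h/2}$, so that the two vertical maps are defined on $\CV_{\s\times E,\CO}$ and $\CV_{\s^{\mathbf f}\times E,\mathbf e}$). The one step beyond bookkeeping is the identity $\CF_{E}\circ\theta^{*}=\theta^{*}\circ\CF_{E}$: it works precisely because $\oU(E)$ preserves the quadratic form $\la\,,\,\ra_{E,\R}$ that defines $\CF_{E}$, so $\theta$ does not entangle that structure with the remaining variables; everything else amounts to propagating supports through $\theta^{*}$ and $r_{\s^{\mathbf f}\times E}$ and checking that temperedness is preserved, which is clear since $\theta$ is algebraic.
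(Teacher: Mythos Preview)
Your proof is correct and follows the same approach as the paper's, which is extremely terse: the paper only says that the first assertion follows because $\theta^{*}$ and $r_{\s^{\mathbf f}\times E}$ commute with the partial Fourier transform $\CF_{E}$, and the second assertion follows from Lemma \ref{thetar}. You have unpacked exactly these two points, supplying the explicit equivariance $E(g\mathbf e g^{-1})=g\cdot E(\mathbf e)$, the support computation, the isometry argument for $\CF_{E}\circ\theta^{*}=\theta^{*}\circ\CF_{E}$, and the observation that $\iota_{\mathbf h/2}$ annihilates the constant function on $\oU(E)$ under $r_{\s^{\mathbf f}\times E}$.
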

\begin{proof} The first assertion follows by noting that both $\theta^*$ and
$r_{\s^\mathbf{f},E}$ commute with the partial Fourier transform
along $E$. The second assertion follows from Lemma \ref{thetar}.
\end{proof}

\begin{lemp}
\label{eulersmall} The vector field $\epsilon_{\s^\mathbf{f},1-\ad(\mathbf{h}/2)}$ acts
semisimply on $\con^{-\xi}(\s^{\mathbf f};\{0\})$, and all its
eigenvalues are real numbers $<-\frac{1}{2}\dim_\R \s$.
\end{lemp}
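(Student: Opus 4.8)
The plan is to compute the action of the vector field $\epsilon_{\s^{\mathbf f},1-\ad(\mathbf h/2)}$ on $\con^{-\xi}(\s^{\mathbf f};\{0\})$ directly from the representation theory of the standard triple $\mathbf h,\mathbf e,\mathbf f$. First I would recall that $\con^{-\xi}(\s^{\mathbf f};\{0\})=\C[\s^{\mathbf f*}]\cdot \delta_{\s^{\mathbf f}}$, i.e. it is spanned by constant-coefficient differential operators applied to the Dirac function at the origin of $\s^{\mathbf f}$. Since $1-\ad(\mathbf h/2)$ is a (commuting sum of a) diagonalizable operator on $\s^{\mathbf f}$, the associated Euler-type vector field $\epsilon_{\s^{\mathbf f},1-\ad(\mathbf h/2)}$ acts diagonalizably on this space: on the monomial basis built from a basis of $\s^{\mathbf f}$ consisting of $\ad(\mathbf h)$-eigenvectors, with $\ad(\mathbf h)$-eigenvalue $-2m$ on a given basis vector (recall $\s^{\mathbf f}=\bigoplus_{m\ge 0}(\s^{\mathbf f}\cap\s_{\mathbf h}^{-2m})$, the lowest-weight spaces of the $\mathfrak{sl}_2$-action since $\mathbf f$ is the lowering operator), each differentiation contributes $-(1+m)$ to the eigenvalue (the $+1$ from differentiating $\delta$, i.e. the usual shift making $\epsilon_{\s^{\mathbf f}}$ act by $-\dim_\R\s^{\mathbf f}$ on $\delta_{\s^{\mathbf f}}$, plus $-m$ from the $-\ad(\mathbf h/2)$ term). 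Hence every eigenvalue is of the form $-\sum_j(1+m_j)$ over the differentiations used, and in particular is $\le -\dim_\R\s^{\mathbf f}$, attained on $\delta_{\s^{\mathbf f}}$ itself (where the value is exactly $-\dim_\R\s^{\mathbf f}$, since there $\epsilon_{\s^{\mathbf f},1-\ad(\mathbf h/2)}\,\delta_{\s^{\mathbf f}}=-\tr_{\s^{\mathbf f}}(1-\ad(\mathbf h/2))\,\delta_{\s^{\mathbf f}}$ and $\tr_{\s^{\mathbf f}}\ad(\mathbf h)=0$ by $\mathfrak{sl}_2$-theory, $\mathbf h$ acting with symmetric spectrum).

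So the whole statement reduces to the strict numerical inequality
\[
  \dim_\R \s^{\mathbf f} > \tfrac12\dim_\R\s,
\]
equivalently $\dim_\R[\s,\mathbf e]<\dim_\R\s^{\mathbf f}$, i.e. $\dim_\R[\s,\mathbf e]<\dim_\R\s-\dim_\R[\s,\mathbf e]$, i.e. the orbit $\CO=\oU(E)\cdot\mathbf e$ has dimension strictly less than half of $\dim_\R\s$. The key step is therefore to prove that a nilpotent orbit in $\su(E)$ is never "large" in this sense as soon as $\dim_A(E)\ge 2$; this is where the distinguishedness of $\CO$ is \emph{not} needed, but the hypothesis $\dim_A(E)\ge 2$ is. I would argue via the $\mathfrak{sl}_2$-decomposition $E=\bigoplus_k E_k$ into isotypic pieces for the standard triple: $\dim_\R[\s,\mathbf e]=\dim_\R\s-\dim_\R\s^{\mathbf e}$ and $\s^{\mathbf e}$ has a well-known lower bound in terms of the partition data, while $\dim\s$ is quadratic in $\dim_A E$; concretely one shows $\dim_\R\s^{\mathbf f}-\dim_\R[\s,\mathbf e]=\dim_\R\s^{\mathbf f}-\dim_\R\s^{\mathbf e}+\ (\dim_\R\s^{\mathbf e}-\dim_\R[\s,\mathbf e])$ — and $\dim\s^{\mathbf e}=\dim\s^{\mathbf f}$ — so the claim is exactly $2\dim_\R\s^{\mathbf e}>\dim_\R\s$, i.e. $\dim_\R\s^{\mathbf e}>\dim_\R[\s,\mathbf e]$. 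Since $\s^{\mathbf e}\supset \s^{\mathbf h,\mathbf e,\mathbf f}\oplus$ (nilpotent part) and, by the $\mathfrak{sl}_2$-theory, $\dim\s^{\mathbf e}$ equals the number of Jordan blocks of $\ad\mathbf e$ on $\s$ while $\dim[\s,\mathbf e]=\dim\s-\dim\s^{\mathbf e}$, one reduces to: the number of Jordan blocks of $\mathbf e$ acting on $\s$ exceeds $\tfrac12\dim\s$. This last inequality I would verify by going through the explicit structure of $\su(E)$ as a subspace of $\Hom_A(E,E)$ with the $\mathfrak{sl}_2\otimes A$-action induced from $E=\bigoplus(\text{irreducibles})$, using the standard formula for $\dim\g^e$ for a classical Lie algebra $\g$ in terms of the dual partition (e.g. \cite[Section 6.1]{CM}) and checking the inequality term by term on the partition.

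The main obstacle is precisely this combinatorial/Lie-theoretic inequality $\dim_\R\s^{\mathbf e}>\tfrac12\dim_\R\s$ for $\dim_A E\ge2$, and in particular making sure it survives the passage from $\gl_A(E)$ to $\s=\su(E)$ (removing the trace and intersecting with $\u(E)$) and is uniform across all five types in \eqref{five} — the Type II general-linear case and the small-dimensional orthogonal/unitary cases are the delicate ones, and one must check that $\dim_A E=1$ is genuinely excluded (there $\s=0$ and the inequality is vacuous/false). Everything else — the diagonalization of $\epsilon_{\s^{\mathbf f},1-\ad(\mathbf h/2)}$ on $\C[\s^{\mathbf f*}]\delta_{\s^{\mathbf f}}$, the trace computation giving the eigenvalue $-\dim_\R\s^{\mathbf f}$ on $\delta_{\s^{\mathbf f}}$, and the monotonicity of eigenvalues — is routine bookkeeping with $\mathfrak{sl}_2$-weights.
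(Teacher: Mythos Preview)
Your diagonalization of $\epsilon_{\s^{\mathbf f},1-\ad(\mathbf h/2)}$ on $\C[\s^{\mathbf f*}]\delta_{\s^{\mathbf f}}$ is correct, but the trace computation contains a genuine error that derails the rest of the argument. You claim
\[
  \tr_{\s^{\mathbf f}}\ad(\mathbf h)=0
\]
by ``$\mathbf h$ acting with symmetric spectrum''. The spectrum of $\ad(\mathbf h)$ is symmetric on $\s$, not on $\s^{\mathbf f}$: the subspace $\s^{\mathbf f}$ is exactly the span of the \emph{lowest} weight vectors of the $\sl_2(\R)$-action, so $\ad(\mathbf h)$ acts on it with eigenvalues $-2m_j\le 0$, one for each irreducible summand of $\s$ of dimension $2m_j+1$. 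Hence
\[
  \tr_{\s^{\mathbf f}}\bigl(1-\ad(\mathbf h/2)\bigr)=\sum_j(1+m_j)=\tfrac12\sum_j(2m_j+1)+\tfrac12\sum_j 1
  =\tfrac12\dim_\R\s+\tfrac12\dim_\R\s^{\mathbf f},
\]
and the eigenvalue of $\epsilon_{\s^{\mathbf f},1-\ad(\mathbf h/2)}$ on $\delta_{\s^{\mathbf f}}$ is $-\tfrac12\dim_\R\s-\tfrac12\dim_\R\s^{\mathbf f}$, which is already $<-\tfrac12\dim_\R\s$ once $\s\ne 0$ (and this is all that $\dim_A E\ge 2$ is used for). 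Further differentiations only make the eigenvalue more negative, since each $\partial_j$ contributes $-(1+m_j)\le -1$. So the lemma follows immediately; no further combinatorics is needed.

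By contrast, the inequality you reduce to, $\dim_\R\s^{\mathbf e}>\tfrac12\dim_\R\s$, is \emph{false} in the cases of interest. For the principal nilpotent $\mathbf e$ in $\s=\sl_n(\R)$ one has $\dim_\R\s^{\mathbf e}=n-1$ and $\tfrac12\dim_\R\s=\tfrac12(n^2-1)$, so the inequality fails for every $n\ge 2$. Thus the ``main obstacle'' you identify is not only unnecessary but unprovable. The paper's proof is essentially the corrected version of yours: it decomposes $\s$ into irreducible $\sl_2(\R)$-pieces $F$ of dimension $r+1$, observes that on the one-dimensional $F^{\mathbf f}$ the operator $1-\ad(\mathbf h/2)$ is the scalar $1+r/2$, so $\epsilon_{F^{\mathbf f},1-\ad(\mathbf h/2)}=(1+r/2)\epsilon_{F^{\mathbf f}}$ has eigenvalues $\le -(1+r/2)<-\tfrac12(r+1)=-\tfrac12\dim_\R F$, and sums over the pieces.
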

\begin{proof}
Recall that $\s$ is assumed to be nonzero. We view $\s$ as a $\sl_2(\R)$-module via the adjoint
representation and the standard triple $\{\mathbf{h},\mathbf{e},
\mathbf{f}\}$. We shall prove that the analog of Lemma
\ref{eulersmall} holds for any finite dimensional nonzero
$\sl_2(\R)$-module $F$. Without loss of generality, we may assume
that $F$ is irreducible of real dimension $k+1$. Then
\[
  \epsilon_{F^\mathbf{f},1-\mathbf{h}/2}=(1+k/2)\epsilon_{F^\mathbf{f}},
\]
which clearly acts semisimply on $\con^{-\xi}(F^{\mathbf
f};\{0\})$, with all its eigenvalues real numbers $\leq -(1+k/2)
=-\frac{1}{2}\dim_\R F-\frac{1}{2}<-\frac{1}{2}\dim_\R F$.
\end{proof}

In view of Lemma \ref{injective}, Proposition \ref{negative} will
follow from Proposition \ref{glk} and Lemma \ref{eulersmall}. We
have thus proved Proposition \ref{indn} when $\CO_i$ is
distinguished.

\section{Reduction within the null cone: metrically proper orbits}
\label{sec:nullcone2}

We are in the same setting as Section \ref{sec:nullcone}, so
$(A,\tau)$ is simple and $E$ is nonzero. Now assume that $\CO_i$ is
not distinguished. The purpose of this section is to prove
Proposition \ref{indn} in this case.

If $F$ is a non-degenerate finite dimensional real quadratic space,
we say that a submanifold $S$ of $F$ is metrically proper if  for
every $x\in S$, the tangent space $\oT_x(S)$ is contained in a
proper non-degenerate subspace of the real quadratic space  $F$.

\begin{lemp}\label{metricp}
The orbit $\CO_i$ is metrically proper in $\s$.
\end{lemp}

\begin{proof}
Let $x\in \CO_i$. By definition, it commutes with a nonzero
semisimple element $h\in \s$. Denote by $\a_h$ the center of $\s^h$
(the centralizer of $h$ in $\s$), which is a nonzero non-degenerate
subspace of $\s$.

Using the fact that every element of $\a_h$ commutes with $x$, we
see that the tangent space
\[
  \oT_x(\CO_i)=[\u(E),x]=[\s, x]
\]
is contained in the proper non-degenerate subspace $(\a_h)^\perp\subset \s$.

\end{proof}

The following lemma is a form of the uncertainty principle.
\begin{lemp}\label{uncert}
Let $M$ be a Nash manifold and let $F$ be a non-degenerate finite dimensional real quadratic space. Let $Z_1\supset Z_2$ be closed  semialgebraic subsets of $F$ so that the difference $Z_1\setminus Z_2$ is a metrically proper submanifold of $F$. Then \[
  \con^{-\xi}(M\times F; M\times Z_1,M\times \Gamma_F)=\con^{-\xi}(M\times F; M\times Z_2,M\times \Gamma_F),
\]
where $\Gamma_F$ is the null cone of $F$.
\end{lemp}
\begin{proof}
This is a direct consequence of \cite[Lemma 2.2]{JSZ2}.
\end{proof}

In view of Lemma \ref{uncert}, Proposition \ref{indn} in the metrically proper case follows by noting that the partial Fourier transform $\CF_{\s}$ preserves the space $\con^{-\xi}_{\chi_E}(\s\times E)$ and that $\CN_i$ is contained in the null cone of the real quadratic space $\s$.

\section{Reduction to the null cone and proof of Theorem \ref{liealg}}
\label{sec:reduction}

Now let $E$ be an $\epsilon$-Hermitian $A$-module, with $(A,\tau)$
arbitrary. Define an involution on $\End_A(E)$, which is still
denoted by $\tau$,  by requiring that
\begin{equation}\label{deftau}
  \la x u,v\ra_E=\la u, x^\tau v\ra_E,\quad x\in \End_A(E),\,u,v\in E.
\end{equation}

For any $x$ in $\End_A(E)$, let $A_x$ be the subalgebra generated by
$x$, $x^\tau$ and scalar multiplications by $A$.  If $x$ is a
semisimple element in $\oU(E)$ or $\u(E)$, then $(A_x,\tau)$ is a
commutative involutive algebra. Write $E_x:=E$, to be viewed as an
$\epsilon$-Hermitian $A_x$-module with the form $\la\,,\,\ra_{E_x}$.
The latter is specified by
\[
    \tr_{A_x/\R}(a\la u, v\ra_{E_x})= \tr_{A/\R}(\la a u, v\ra_{E}), \quad u,v\in E, \,\,a\in A_x.
\]

Write
\[
   (A,\tau)=(A_1,\tau_1)\times (A_2,\tau_2)\times \cdots \times (A_l,\tau_l)
\]
as a product of simple commutative involutive algebras. We then have
\begin{equation}
\label{producte}
  E=E_1\times E_2\times \cdots\times E_l,
\end{equation}
where $E_j:=A_j\otimes_A E$, which is naturally an
$\epsilon$-Hermitian $A_j$-module. Note that $E_j$ is free as an
$A_j$-module. Put
\begin{equation}
\label{dsdimension}
  \sdim(E):=\sum_{j=1}^l \max\{\rank_{A_j}(E_j)-1,\,0\}+\dim_\R (E).
\end{equation}

The following result may be considered as a case of Harish-Chandra descent.

\begin{prp}\label{descent1}
Assume that for all
commutative involutive algebra $A'$ and all $\epsilon$-Hermitian $A'$-module
$E'$,
\begin{equation}\label{vanishep1}
  \sdim(E')<\sdim(E) \quad\textrm{implies}\quad \con^{-\xi}_{\chi_{E'}}(\u(E')\times E')=0.
\end{equation}
Then every $f\in \con^{-\xi}_{\chi_{E}}(\u(E)\times E)$ is
supported in $(\z(E)+\CN_E)\times E$.
\end{prp}

\begin{proof}
Let $x$ be a semisimple element in $\u(E)\setminus \z(E)$. Then $
\sdim(E_x)<\sdim(E)$.

For any $y\in \u(E_x)$, denote by $J(y)$  the determinant of the
$\R$-linear map
\[
   [y, \cdot]: \u(E)/\u(E_x)\rightarrow \u(E)/\u(E_x).
\]
Then $J$ is a $\breve{\oU}(E_x)$-invariant function on $\u(E_x)$.
Put
\[
  \u(E_x)^\circ:=\{y\in \u(E_x)\mid J(y)\neq 0\},
\]
which contains $x+\CN_{E_x}$.  The map
\[
 \begin{array}{rcl}
   \pi _x: \breve{\oU}(E)\times (\u(E_x)^\circ\times E_x)&\rightarrow& \u(E)\times E,\\
           (g,y,v) &\mapsto &g \cdot (y,v)
   \end{array}
\]
is a submersion. Therefore we have a well defined restriction map
(\cite[Lemma 4.4]{JSZ2})
\begin{equation}\label{reep}
  r_{E,E_x}: \con^{-\xi}_{\chi_{E}}(\u(E)\times E)\rightarrow  \con^{-\xi}_{\chi_{E_x}}(\u(E_x)^\circ\times
   E_x),
\end{equation}
which is specified by the rule
\[
   \pi_x^*(f)=\chi _E\otimes  r_{E,E_x}(f).
\]
The assumption (\ref{vanishep1}) easily implies that the latter
space in \eqref{reep} is zero. Thus every $f\in
\con^{-\xi}_{\chi_{E}}(\u(E)\times E)$ vanishes on the image of
$\pi_x$. As $x$ is arbitrary, the proposition follows.
\end{proof}

\begin{prpp}\label{descent2} Assume that $A$ is simple, and for all commutative involutive algebra $A'$ and all $\epsilon$-Hermitian $A'$-module $E'$,
\[
 \sdim(E')<\sdim(E)\quad\textrm{implies}\quad \con^{-\xi}_{\chi_{E'}}(\u(E')\times E')=0.
\]
Then every $f\in \con^{-\xi}_{\chi_{E}}(\u(E)\times E)$ is supported
in $\u(E)\times \Gamma_E$.
\end{prpp}
\begin{proof}
The proof is similar to that of \cite[Proposition 5.2]{AGRS}.
\end{proof}

We are now ready to prove Theorem \ref{liealg}, which will be by induction on $\sdim(E)$. If $\sdim(E)$ is $0$, then $E=0$ and the theorem is trivial. Now assume that $E$ is nonzero and we have proved Theorem \ref{liealg} when $\sdim(E)$ is smaller.

Without loss of generality, assume that $E$ is faithful as an
$A$-module. If $A$ is not simple, then for $1\leq j\leq l$,
\[
  \sdim(E_j)<\sdim(E) \,\,\textrm{ and thus }\,\,\con^{-\xi}_{\chi_{E_j}}(\u(E_j)\times E_j)=0.
\]
This clearly implies that $\con^{-\xi}_{\chi_{E}}(\u(E)\times E)=0$.

Otherwise assume that $A$ is simple.  Note that $\breve{\oU}(E)$ acts trivially on $\z(E)$. Propositions \ref{descent1} and \ref{descent2} imply that every element in $\con^{-\xi}_{\chi_{E}}(\su(E)\times E)$ is supported in $\CN_E\times \Gamma_E$, and Proposition \ref{indn} further implies that $\con^{-\xi}_{\chi_{E}}(\su(E)\times E)=0$. Therefore $\con^{-\xi}_{\chi_{E}}(\u(E)\times E)=0$ and the proof of Theorem \ref{liealg} is now complete.

\section{Proof of Theorem \ref{thm:main}}
\label{sec:proofC}

The argument of this section is standard and we will thus be brief.  As before, let $(A,\tau)$ be a commutative involutive algebra and let $E$ be an $\epsilon$-Hermitian $A$-module.

\begin{thm}\label{liealg2}
One has that $\con^{-\infty}_{\chi_{E}}(\u(E)\times E)=0$.
\end{thm}
\begin{proof}
In view of Theorem \ref{liealg}, this follows from a general principle of ``distributions versus Schwartz distributions" (\cite[Theorem 4.0.2]{AGS1}).
\end{proof}

\begin{thmt}\label{liegp}
One has that $\con^{-\infty}_{\chi_{E}}(\oU(E)\times E)=0$.
\end{thmt}
\begin{proof}
Again we prove by induction on $\sdim(E)$ and assume that the theorem holds when $\sdim(E)$ is smaller. As in the proof of Proposition \ref{descent1}, we show that
\[
  \con^{-\infty}_{\chi_{E}}(\oU(E)\times E)=\con^{-\infty}_{\chi_{E}}(\oU(E)\times E; (\oZ(E)\CU_E)\times E),
\]
where $\CU_E$ is the set of unipotent elements in $\oU(E)$. Note that $\breve{\oU}(E)$ acts on $\oZ(E)$ trivially. The map
\[
    \begin{array}{rcl}
     \rho_E: \oZ(E)\times \su(E)\times E&\rightarrow&  \oU(E)\times E,\\
          (z,x,v)&\mapsto& (z\exp(x),v)
    \end{array}
\]
is $\breve{\oU}(E)$-equivariant and yields an injective pull back map
\[
            \con_{\chi_E}^{-\infty}(\oU(E)\times E; (\oZ(E) \CU_E)\times E)
     \lhook\joinrel\xrightarrow{\rho_E^*}
        \con_{\chi_E}^{-\infty}(\oZ(E)\times \su(E)\times E; \oZ(E)\times
  \CN_E\times E).
\]
The latter space vanishes by Theorem \ref{liealg2} and the result follows.

\end{proof}

Assume for the moment that $(A,\tau)$ is simple and $\epsilon=1$. Let $E=E'\oplus Av_0$ be an orthogonal decomposition with $v_0\notin \Gamma_E$ (the null cone of $E$). Then $\breve{\oU}(E')$ is identified with the stabilizer of $v_0\in E$ in $\breve{\oU}(E)$ via the embedding
\begin{equation}\label{embu}
  (g,\delta)\mapsto \left(\left[\begin{array}{cc}
    \delta g&0\\
    0&\tau_\delta\\
  \end{array}\right], \delta\right),
\end{equation}
where $\tau_\delta: Av_0\rightarrow A v_0$ is the $\R$-linear map
given by
\[
  \tau_\delta(a v_0)=\left\{
                           \begin{array}{ll}
                           a v_0,\quad&\textrm{if }\delta=1,\\
                           -a^\tau v_0,\quad&\textrm{if }\delta=-1.
                           \end{array}
                       \right.
\]

The following result is a consequence of Theorem \ref{liegp} in the case of $\epsilon=1$. We refer the reader to \cite[Proposition 5.1]{AGRS} for the necessary argument.

\begin{cort}\label{liegph}
Let the notation be as in this section. Let $\breve{\oU}(E')$ act on $\oU(E)$ through the action of $\breve{\oU}(E)$. Then  $\con^{-\infty}_{\chi_{E'}}(\oU(E))=0$.
\end{cort}

Corollary \ref{liegph} implies Theorem \ref{thm:main} for the first five classical groups of this paper.

\vsp

Now assume that $(A,\tau)$ is simple and $\epsilon=-1$. Write
\[
   \oH(E):=E\times
   A^{\tau=1}\quad \textrm{(where $A^{\tau=1}$ is the set of $\tau$-fixed elements in $A$)}
\]
for the Heisenberg group with group multiplication
\[
   (u,t)(u',t')=(u+u', t+t'+\frac{\la u,u'\ra_E}{2}-\frac{\la
   u',u\ra_E}{2}).
\]
Let $\breve{\oU}(E)$ act on $\oH(E)$ as group automorphisms by
\[
     (g,\delta)\cdot (u,t):=(gu, \delta t).
\]
We form the semidirect products (the Jacobi groups)
\begin{equation}
\label{defjacobi}
   \breve{\oJ}(E):=\breve{\oU}(E)\ltimes\oH(E)\supset \oJ(E):=\oU(E)\ltimes\oH(E).
\end{equation}

The following result is a consequence of Theorem \ref{liegp} in the case of $\epsilon=-1$. The necessary argument can be found in \cite[Theorem 3.1]{Dijk2} or \cite[Theorem D]{Su09}.

\begin{cort}\label{liegpsh}
Let the notation be as in this section. Let $\breve{\oU}(E)$ act on $\oJ(E)$ by
\[
  g\cdot x:=gx^{\chi_E(g)}g^{-1}.
\]
Then $\con^{-\xi}_{\chi_E}(\oJ(E))=0$.
\end{cort}

Corollary \ref{liegpsh} implies Theorem \ref{thm:main} for all Jacobi groups.

\vsp Finally we come to the special orthogonal groups. The key idea
to establish this variant is due to Waldspurger \cite{Wald09} and it
is to introduce another extended group. Assume that $\epsilon=1$. If
$E$ is a quadratic space (i.e. $A$ is $\R$ or $\C$, and $\tau$ is
trivial), define
\[
   \operatorname{S}\!\breve{\oO}(E):=\left \{(g,\delta)\in \oO(E)\times \{\pm 1\}\mid \det(g)=\delta^{\left[\frac{\dim_A E +1}{2} \right]}\right \}\supset \SO(E).
\]
Denote by $\chi_{\os, E}$ the quadratic character on
$\operatorname{S}\!\breve{\oO}(E)$ with kernel $\SO(E)$. Let
$\operatorname{S}\!\breve{\oO}(E)$ act on $\SO(E)$ and $E$  as in
equations \eqref{actiong} and \eqref{actione}, respectively.

\begin{thmt}\label{liegpso}
One has that $\con^{-\infty}_{\chi_{\os, E}}(\SO(E)\times E)=0$.
\end{thmt}

The descent process in the proof of Theorem \ref{liegpso} requires
us to define a compatible family of extended groups.  First assume
that $(A,\tau)$ is simple. Define
\[
  (\breve \oU_{\rs}(E), \oU_{\rs}(E)):=\begin{cases}
                                            (\operatorname{S}\!\breve{\oO}(E),\SO(E)),\quad &
                                            \textrm{if $\tau$ is
                                            trivial},\smallskip\\
                                            (\breve{\oU}(E),\oU(E)),\quad &
                                            \textrm{if $\tau$ is
                                           non-trivial}.\end{cases}
\]
In general, write $E=E_1\times E_2\times \cdots\times E_l$ as in
\eqref{producte}.  We put
\[
  \oU_{\rs}(E):=\oU_{\rs}(E_1)\times \oU_{\rs}(E_2)\times \cdots \times \oU_{\rs}(E_l),
\]
and
\begin{eqnarray*}
       \breve \oU_{\rs}(E)&:=&\breve \oU_{\rs}(E_1)\times_{\{\pm 1\}} \breve \oU_{\rs}(E_2)\times_{\{\pm 1\}} \cdots \times _{\{\pm 1\}} \breve \oU_{\rs}(E_l)\\
       &:=&\{(g_1,g_2,\cdots,g_l,\delta)\mid (g_j,\delta)\in \breve \oU_{\rs}(E_j),\, j=1,2,\cdots, l \}.
\end{eqnarray*}
The latter contains the former as a subgroup of index two. Let
$\breve \oU_{\rs}(E)$ act on $\oU_{\rs}(E)$ and $E$, again as in
\eqref{actiong} and \eqref{actione}.

In the notation of this paper, Waldspurger's observation may be
stated as follows.

\begin{lemt} Let $x$ be a semisimple element of
$\oU_{\rs}(E)$ and let $E_x$ be as in Section \ref{sec:reduction}.
Then $x\in \oU_{\rs}(E_x)$, and  $\breve \oU_{\rs}(E_x)$ is
contained in the stabilizer of $x$ in $\breve \oU_{\rs}(E)$.
\end{lemt}

\begin{lemt} Assume that $A$ is simple. Let $v\in E\setminus \Gamma_E$. Then the stabilizer of $v$ in $\breve \oU_{\rs}(E)$ is naturally isomorphic
to $\breve \oU_{\rs}(E')$, where $E'$ is the orthogonal complement
of $Av$ in $E$.
\end{lemt}

The argument of this paper, together with the above two lemmas, will
imply Theorem \ref{liegpso}. We skip the details. Theorem
\ref{liegpso} implies the analog of Corollary \ref{liegph} for
special orthogonal groups. Theorem \ref{thm:main} for special
orthogonal groups then follows.

\end{document}